\newcommand\mydots{\hbox to 1em{.\hss.\hss.}}
\theoremstyle{plain}
      \newtheorem{theorem}{Theorem}[section]
      \newtheorem{lemma}[theorem]{Lemma}
      \theoremstyle{definition}
      \newtheorem{definition}[theorem]{Definition}
      \theoremstyle{remark}
      \theoremstyle{proposition}
      \newtheorem{proposition}[theorem]{Proposition}
      \newcommand{\C}{{\mathbb{C}}}
      \newcommand\blfootnote[1]{%
  \begingroup
  \renewcommand\thefootnote{}\footnote{#1}%
  \addtocounter{footnote}{-1}%
  \endgroup
}
\newcommand{\overbar}[1]{\mkern 1.2mu\overline{\mkern-1.5mu#1\mkern-1.5mu}\mkern 1.2mu}
   		\newcommand{\opn}{\operatorname}
      \def\@setcopyright{}
      \def\serieslogo@{}
      \title{On the basic structure theorems of global singularity theory}
      \author{Natalia Kolokolnikova}
       \address{University of Geneva, Switzerland}
   \email{Natalia.Kolokolnikova@unige.ch}
\begin{document}
\maketitle
\section{Introduction}
\blfootnote{The research was supported by Grant 156645 of the Swiss Science Foundation} Global singularity theory originates from problems in obstruction theory. Consider the following question: is there an immersion in a given homotopy class of maps between two smooth manifolds? We can reformulate this problem as follows. Suppose $M$ and $N$ are smooth real manifolds with $\opn{dim}N \geq \opn{dim} M,$ and $f\colon M \rightarrow N$ is a sufficiently generic smooth map in a fixed homotopy class. The map $f$ is an immersion, if 
$$\Sigma^1 (f)\stackrel{\tiny{def}}{=}\{p \in M\ |\ \opn{dim} \opn{Ker}(d_pf) \geq 1 \}= \varnothing.$$\par
The set $\Sigma^1 (f)$ is called the $\Sigma^1$-\textit{singularity locus}, or simply the $\Sigma^1$-\textit{locus} of $f,$ i.e. the points in $M$ where $f$ has a $\Sigma^1$-\textit{singularity}: the kernel of the differential of $f$ is non-zero. In the case of $\mathbb{Z}_2$-cohomology and a sufficiently generic map $f$, the set $\Sigma^1 (f)$ represents a cohomology class via Poincar\'e duality. Clearly, if the  Poincar\'e dual $\opn{PD}[\Sigma^1 (f)]$ is non-zero in $H^*(M,\mathbb{Z}_2)$, then $f$ is not an immersion. \par
In the 50s, Ren\'e Thom proved the following statement, now known as Thom's principle.
\begin{theorem}[Thom's principle, \cite{Th}]
Let $\Theta$ be a singularity and let $m \leq n$ be non-negative integers. 
Suppose $\{a_1,\mydots,a_m\}$ and $\{ a'_1,\mydots, a'_n\}$ are two sets of graded variables with $\opn{deg}a_i=\opn{deg} a'_i=i.$ Then there exists a universal polynomial in $a_1,\mydots,a_m$ and  $a'_1,\mydots, a'_n$ $$\opn{Tp}[\Theta](a_1,\mydots,a_m,a'_1,\mydots, a'_n)$$ depending only on $\Theta,$ $m$ and $n$, such that for all smooth compact real manifolds $M$ and $N$, $\opn{dim}(M) =m,$ $\opn{dim}(N)=n,$ and a sufficiently generic smooth map $f\colon M\rightarrow N,$   
$$ \overline{\Theta (f)} = \overline{\{ p\in M\ |\ f\ \text{has a singularity of type}\ \Theta\ \text{at}\ p\}}$$ is a cycle in $M$, and $$\opn{PD}[ \overline{\Theta (f)}] =\opn{Tp}[\Theta](w_1 (TM),\mydots, w_m (TM),f^* w_1(TN), \mydots, f^* w_n (TN)) \in H^* (M,\mathbb{Z}_2),$$
where $w_i (TM)$ and $w_j (TN)$ are the Stiefel-Whitney classes of the corresponding tangent bundles.
\end{theorem}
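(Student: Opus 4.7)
The plan is to realise $\overline{\Theta(f)}$ as the pullback, via the $k$-jet extension $j^{k}f$, of a universal $\Theta$-stratum in the jet bundle $J^{k}(M,N)$, and then to identify the Poincar\'e dual of that universal stratum with a polynomial in Stiefel--Whitney classes. Concretely, I would fix $k$ large enough that $\Theta$ is cut out by a locally closed subset $\widetilde{\Theta}\subset J^{k}(\R^{m},\R^{n})_{(0,0)}$ invariant under the natural left--right action of diffeomorphism $k$-jets. The bundle $J^{k}(M,N)\to M\times N$ has a contractible affine fibre, and its structure group -- an extension of $GL(m)\times GL(n)$ by a unipotent group of higher-order jet corrections -- reduces to $GL(m)\times GL(n)$ up to homotopy. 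Hence $J^{k}(M,N)$ is associated to the principal $GL(m)\times GL(n)$-bundle over $M\times N$ whose fibre over $(p,q)$ is the product of frames of $T_{p}M$ and $T_{q}N$.

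Next, I would invoke Thom's transversality theorem: a generic $f$ gives a section $j^{k}f\colon M\to J^{k}(M,N)$ transverse to every stratum of $\overline{\widetilde{\Theta}}$. Therefore $\Theta(f)=(j^{k}f)^{-1}(\widetilde{\Theta})$ is a locally closed submanifold of the expected codimension, $\overline{\Theta(f)}$ is a cycle mod $2$, and naturality of Poincar\'e duals under transverse pullback gives $\opn{PD}[\overline{\Theta(f)}]=(j^{k}f)^{*}\opn{PD}[\overline{\widetilde{\Theta}}]$ in $H^{*}(M,\mathbb{Z}_{2})$.

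The last step identifies the universal class. Because $\overline{\widetilde{\Theta}}$ is $GL(m)\times GL(n)$-invariant and sits inside a contractible fibre, the Borel construction promotes it to a class in
$$H^{*}\bigl(BGL(m)\times BGL(n),\mathbb{Z}_{2}\bigr) \;=\; \mathbb{Z}_{2}[w_{1},\dots,w_{m}]\otimes \mathbb{Z}_{2}[w'_{1},\dots,w'_{n}],$$
which by definition is the polynomial $\opn{Tp}[\Theta](a_{1},\dots,a_{m},a'_{1},\dots,a'_{n})$. Pulling back along the map $M\to BGL(m)\times BGL(n)$ classifying the pair $(TM,f^{*}TN)$ sends $a_{i}\mapsto w_{i}(TM)$ and $a'_{j}\mapsto f^{*}w_{j}(TN)$, producing the stated formula. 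The main obstacle I foresee is verifying that $\overline{\widetilde{\Theta}}$ really does carry a mod-$2$ Borel--Moore fundamental class -- for a general $\Theta$ its closure may be singular, so one has to argue stratum by stratum and check that transverse pullback of this cycle is again a cycle; once these cycle-theoretic issues are handled, the translation between fibre cohomology, equivariant cohomology, and Stiefel--Whitney classes is purely formal.
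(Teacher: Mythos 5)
Your sketch is the standard jet-bundle/classifying-space argument and coincides with the framework this paper itself uses: the paper never proves this real statement (it is quoted from Thom), but its complex-case construction --- the equivariant Poincar\'e dual of $\overbar{\Theta}$ for the $\opn{Gl}_n\times\opn{Gl}_k$-action, living in $H^*(BG)$, combined with Kazarian's theorem to identify the transverse pullback with $\opn{PD}[\overline{\Theta(f)}]$ --- is exactly the route you describe, transposed to $\mathbb{Z}_2$ coefficients and Stiefel--Whitney classes. The cycle-theoretic caveat you flag (mod-$2$ fundamental classes of singular closures and transversality to a stratification) is precisely the part the paper also delegates to its references rather than proving.
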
\par
This universal polynomial is called the \textit{Thom polynomial} of $\Theta$. In this paper we will give a rigorous construction of this polynomial in the case of complex manifolds.\par 
Thom's principle may also be translated from the real to the complex case. \par 
\begin{theorem}[Thom's principle in the complex case]
	Let $\Theta$ be a singularity and let $m \leq n$ be non-negative integers. 
Suppose $\{a_1,\mydots,a_m\}$ and $\{ a'_1,\mydots, a'_n\}$ are two sets of graded variables with $\opn{deg}a_i=\opn{deg} a'_i=i.$ Then there exists a universal polynomial in $a_1,\mydots,a_m$ and  $a'_1,\mydots, a'_n$ $$\opn{Tp}[\Theta](a_1,\mydots,a_m,a'_1,\mydots, a'_n)$$ depending only on $\Theta,$ $m$ and $n$, such that for all compact complex manifolds $M$ and $N$, $\opn{dim}(M) =m,$ $\opn{dim}(N)=n,$ and a sufficiently generic holomorphic map $f\colon M\rightarrow N,$   
$$ \overline{\Theta (f)} = \overline{\{ p\in M\ |\ f\ \text{has a singularity of type}\ \Theta\ \text{at}\ p\}}$$ is a cycle in $M$, and $$\opn{PD}[ \overline{\Theta (f)}] =\opn{Tp}[\Theta](c_1 (TM),\mydots, c_m (TM),f^* c_1(TN), \mydots, f^* c_n (TN)) \in H^* (M,\mathbb{R}),$$
where $c_i (TM)$ and $c_j (TN)$ are the Chern classes of the corresponding tangent bundles.
\end{theorem}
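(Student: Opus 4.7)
My plan is to adapt the proof of Thom's principle from the real case (Theorem 1.1) to the complex-analytic setting, replacing real transversality and Stiefel--Whitney classes with their holomorphic and Chern-class analogues. First I would encode the singularity $\Theta$ as a contact-invariant, locally closed analytic subset of the fibre $J^k(\C^m,\C^n)_0$ of the $k$-jet space at the origin, with $k$ large enough to separate $\Theta$ from its neighbouring orbits. The contact group $\mathcal{K}$ acts by pairs of source and target biholomorphisms together with an invertible matrix of germs, and its orbits are precisely the singularity types. Assuming a suitable holomorphic transversality statement, for a sufficiently generic holomorphic $f\colon M\to N$ the $k$-jet section $j^kf\colon M\to J^k(M,N)$ is transverse to the stratum $\overline{\Theta}_{M,N}\subset J^k(M,N)$ induced by the closure of $\Theta$; then $\overline{\Theta(f)}=(j^kf)^{-1}(\overline{\Theta}_{M,N})$ is an analytic subvariety of $M$ of the expected pure codimension, hence a cycle with a well-defined fundamental class in $H^*(M,\R)$.

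Next I would globalise via a classifying-space construction. The bundle $J^k(M,N)\to M$ is associated to a principal bundle whose structure group deformation retracts onto $GL_m(\C)\times GL_n(\C)$, so the fibrewise fundamental class of $\overline{\Theta}$ lifts to a $GL_m(\C)\times GL_n(\C)$-equivariant cohomology class, that is, to an element of $H^*\bigl(B(GL_m(\C)\times GL_n(\C)),\R\bigr)$. By the classical computation this ring is the polynomial algebra $\R[c_1,\mydots,c_m,c'_1,\mydots,c'_n]$ on the universal Chern classes of the two tautological bundles, with degrees matching the prescribed $a_i,a'_j$. Renaming the generators produces a polynomial $\opn{Tp}[\Theta](a_1,\mydots,a_m,a'_1,\mydots,a'_n)$ depending only on $\Theta$, $m$ and $n$. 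Naturality of Chern classes under the classifying map of $TM\oplus f^*TN$, combined with the equivariant Thom class formalism, then translates this universal relation into the claimed identity for $\opn{PD}[\overline{\Theta(f)}]$ in $H^*(M,\R)$.

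The main obstacle is geometric and analytic rather than cohomological. One has to verify that $\overline{\Theta}$ is a pure-codimension analytic subvariety of $J^k(\C^m,\C^n)_0$ admitting a Whitney stratification, so that its fundamental class is well defined, $\mathcal{K}$-invariant, and stable under enlarging $k$. More delicate is the holomorphic genericity statement itself: the space of holomorphic maps between compact complex manifolds is often finite-dimensional, so Thom's multijet transversality theorem does not apply directly. I would either restrict to holomorphic sections of a sufficiently positive bundle and invoke a Bertini-type argument, or approximate by smooth maps, prove the statement there, and show that the resulting cohomology class is independent of the smoothing. Once these analytic inputs are in place, the passage to an equivariant cohomology class and its identification with a polynomial in the Chern classes of $TM$ and $f^*TN$ is essentially formal.
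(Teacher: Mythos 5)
The cohomological half of your plan coincides with what the paper actually does: restrict the left--right action on the jet space to $G=\opn{Gl}_n\times\opn{Gl}_k$, form the Borel construction with Grassmannian approximations, and take the equivariant Poincar\'e dual of $\overbar{\Theta}$ in $H^*_G(J_d^{n,k})\cong H^*(BG)\cong \C[c_1,\mydots,c_n]\otimes\C[c'_1,\mydots,c'_k]$, which is then evaluated at $c_i(TM)$ and $f^*c_j(TN)$. Note that in this algebraic setting your worry about Whitney stratifications is moot: $\overbar{\Theta}$ is a closed subvariety of a nonsingular variety and so has a Borel--Moore fundamental class directly, and independence of $d,n,k$ (up to the difference $k-n$) is dealt with separately in Section \ref{Dam}.

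The genuine gap is in the geometric half. You ``assume a suitable holomorphic transversality statement'' for $j^kf$, but no such theorem exists in this category: for compact complex $M$ and $N$ the space of holomorphic maps is finite-dimensional and $f$ admits no holomorphic perturbations, so neither Thom-type multijet transversality nor a Bertini argument applied to $f$ itself gets off the ground. Your two fallbacks do not close this. A Bertini-type argument for sections of a positive bundle proves statements about such sections, not about an arbitrary given $f$; and approximating $f$ by generic $C^\infty$ maps yields a conclusion about the smooth perturbation, whereas the theorem asserts that $\overline{\Theta(f)}$ itself is an analytic cycle whose Poincar\'e dual is the evaluated polynomial --- transferring the conclusion back to the holomorphic $f$ is exactly the hard point, and it is where the analytic-cycle structure (needed, e.g., for the positivity arguments later in the paper) would be lost. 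The paper does not attempt any transversality argument: ``sufficiently generic'' is taken to mean that the $\Theta$-locus of $f$ has the expected dimension and is reduced, and under that hypothesis the identification of its fundamental class with $\opn{Tp}[\Theta]$ evaluated at $c_i(TM)$ and $f^*c_j(TN)$ is quoted from Kazarian's theorem (Theorem 2.8), with the detailed comparison deferred to \cite{BSz} and \cite{Kaz}. To make your proposal complete you would either have to adopt the same hypothesis and invoke (or reprove) that result, or supply a genuine argument for the Gysin/localization identification for holomorphic maps whose locus has expected dimension; as written, that central step is left open.
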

In fact, a result of Borel and Haefliger \cite{BH} implies that the real Thom polynomial for $\Theta$ may be obtained by substituting the corresponding Stiefel-Whitney classes for the Chern classes in the corresponding Thom polynomial in the complex case.\par
Calculating Thom polynomials is difficult: some progress has been made in the works of B\'erczi and Szenes \cite{BSz}, Rimanyi \cite{Rim}, Feh\'er and Rim\'anyi \cite{FRim}, B\'erczi, Feh\'er and Rim\'anyi \cite{BFR}, Gaffney \cite{Gaf} and Ronga \cite{Ron}.\par
In this paper, we will focus on the properties of Thom polynomials of a particular class of singularities: \textit{contact singularities}. Let $M$ and $N$ be compact complex manifolds such that $\opn{dim}(M)=m$, $\opn{dim}(N)=n$ and $m\leq n.$ Let $f\colon M \rightarrow N$ be a sufficiently generic (see Section \ref{Dam} for the genericity condition) holomorphic map. Let $z_1,\mydots, z_m$ be the local coordinates on a chart $U_p$ centered at a point $p\in M$ and let $y_1,\mydots, y_n$ be the local coordinates on a chart $V_{f(p)}$ centered at $f(p)\in N.$ Let us denote $(f\circ y_j)(z_1,\mydots,z_m)= \sum \alpha_{(i_1,\mydots,i_m)} z_1^{i_1}\mydots z_m^{i_m}$ by $f_j (z_1,\mydots,z_m).$
 Denote the algebra of power series in $z_1,\mydots,z_m$ without a constant term by $\C_0 [[z_1,\mydots,z_m]].$\par
 To each point $p \in M$, we can associate the algebra $$A_f (p)=\C_0[[z_1,\mydots,z_m]]/I\langle f_1,\mydots,f_n\rangle,$$ where $I\langle f_1,\mydots,f_n\rangle$ is the ideal generated by $f_1,\mydots, f_n.$\par
 Suppose $A$ is an algebra. The $\Theta_A$-locus of a map $f$ is defined as 
 $$\Theta_A (f)=\{ p\in M\ |\ A_f (p) \cong A \}.$$\par
By definition, the Thom polynomial is a polynomial in two sets of variables. Damon proved that for contact singularities, the Thom polynomial may be expressed in a single set of variables. This is an important result, which plays a central role in recent advances in global singularity theory, but the proof of Damon's theorem  is hard to find in the literature. In this paper, we will give a modern proof of Damon's result in the complex case.\par
\begin{theorem}[Damon, \cite{Dam}]
	Let $A$ be a finite-dimensional commutative algebra and let $l$ be a non-negative integer. Suppose $\{b_i\}_{i\geq 0}$ is the set of graded variables with $\opn{deg}(b_i)=i.$ There exists a universal polynomial in $\{b_i\}_{i\geq 0}$ depending only on $l$ and on $A$ 
	$$\widetilde{\opn{Tp}}_A^l(b_1,b_2,\mydots),$$
	such that for all compact complex manifolds $M$ and $N$ with $\opn{dim}(N)-\opn{dim}(M)=l$ and for every sufficiently generic holomorphic map $f\colon M\rightarrow N$ 	
	$$\opn{Tp} [\Theta_A ](c_1 (TM), c_2 (TM),\mydots, f^* c_1 (TN), f^* c_2 (TN),\mydots)=\widetilde{\opn{Tp}}_A^l(c_1(f),c_2(f),\mydots),$$
	where the variables $c_i (f)$ are given by 
	$$1+c_1 (f) t + c_2 (f) t^2+\mydots ={{\sum f^* c_i (TN) t^i}\over{\sum c_j (TM) t^j}}.$$
\end{theorem}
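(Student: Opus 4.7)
The plan is to combine the complex Thom principle from the previous theorem with a geometric stabilisation that exploits the contact-invariance of $\Theta_A$, reducing Damon's statement to a purely algebraic invariant-theoretic claim about bisymmetric polynomials in Chern roots.

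\textbf{Geometric stabilisation.} Given $f\colon M\to N$ and any holomorphic rank-$k$ bundle $W\to N$, I would form the total spaces $M'=\opn{Tot}(f^*W)$ and $N'=\opn{Tot}(W)$, compactified to $\mathbb{P}(f^*W\oplus\underline{\C})$ and $\mathbb{P}(W\oplus\underline{\C})$ so that Thom's principle applies, together with the induced bundle map $\tilde f\colon M'\to N'$ covering $f$. Then $\dim N'-\dim M'=l$, the tangent sequences split as $TM'\cong\pi_M^*(TM\oplus f^*W)$ and $\tilde f^*TN'\cong\pi_M^*(f^*TN\oplus f^*W)$, and in bundle coordinates $\tilde f(z,u)=(f(z),A(z)u)$ with $A(0)$ invertible. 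A direct computation of the local algebra gives
\[
A_{\tilde f}(p,0)\cong\C_0[[z,u]]/\langle f_1,\mydots,f_n,u_1,\mydots,u_k\rangle\cong A_f(p),
\]
so $\Theta_A(\tilde f)=\pi_M^{-1}\Theta_A(f)$ and the Poincar\'e duals correspond under $\pi_M^*$. Applying Thom's principle to both $f$ and $\tilde f$ and using bijectivity of $\pi_M^*$ on cohomology yields the stabilisation identity
\[
\opn{Tp}[\Theta_A]\bigl(c(TM),f^*c(TN)\bigr)=\opn{Tp}[\Theta_A]\bigl(c(TM)c(f^*W),f^*(c(TN)c(W))\bigr),
\]
where the two sides refer to the Thom polynomials for $(m,n)$ and $(m+k,n+k)$ respectively.

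\textbf{Algebraic reduction.} Varying $(M,N,f,W)$ over classifying maps into products of Grassmannians, this passes to a universal polynomial identity in the Chern roots $\{\alpha_i\}$ of $TM$, $\{\beta_j\}$ of $f^*TN$ and $\{\gamma_s\}$ of $W$. The Thom polynomial thus becomes a bisymmetric polynomial in $\{\alpha_i\}\cup\{\beta_j\}$ invariant under simultaneously adjoining any tuple $\{\gamma_s\}$ to both root sets. Passing to Newton power sums $p_k$, this shifts both $p_k(\alpha)$ and $p_k(\beta)$ by $\sum_s\gamma_s^k$; taking enough auxiliary $\gamma$'s and using that the map from $k$-tuples $\{\gamma_s\}$ to the first $K$ power sums is surjective onto $\C^K$ once $k\geq K$, one concludes that the polynomial depends only on the differences $p_k(\beta)-p_k(\alpha)$. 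These generate the same subring as the coefficients of $\prod(1+\beta_jt)/\prod(1+\alpha_it)$, namely the $c_i(f)$; expressing $\opn{Tp}[\Theta_A]$ in these variables produces the desired polynomial $\widetilde{\opn{Tp}}_A^l(c_1(f),c_2(f),\mydots)$, manifestly depending only on $l$ and on $A$.

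\textbf{Expected obstacle.} The delicate parts I foresee are, first, justifying the passage from the stabilisation identity on individual triples $(M,N,f,W)$ to a universal polynomial identity, which requires realising arbitrary Chern class inputs via a careful classifying-space argument; and second, making the algebraic differentiation-in-$\gamma$ argument rigorous so as to guarantee a genuine polynomial expression of the correct degree in the $c_i(f)$ rather than a mere formal power series. A minor technical point is verifying that $\tilde f$ inherits genericity from $f$, which should follow from its bundle-linear form in the fibre direction and the transversality of $f$.
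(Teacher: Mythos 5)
Your plan has a genuine gap at the step you call the ``algebraic reduction.'' Your stabilisation identity is proved only instance by instance, as an equality of cohomology classes on particular compact complex manifolds, and you then pass to a universal polynomial identity by ``varying $(M,N,f,W)$ over classifying maps into products of Grassmannians.'' In the holomorphic category this is exactly the point that cannot be waved through: to detect all coefficients of a polynomial identity you would need to realise essentially arbitrary Chern-root data by \emph{sufficiently generic holomorphic} maps between compact complex manifolds, and such maps are scarce -- there is no holomorphic analogue of the smooth genericity theorems that would let you populate enough examples. The paper avoids this entirely by never leaving the universal setting: the Thom polynomial is \emph{defined} as an equivariant Poincar\'e dual of $\overbar{\Theta}_A^{n,k}\subset J_d^{n,k}$ over Grassmannian approximations of $BG$, and the substitution identity $\opn{UTp}[\Theta_A^j](c,c')=\opn{UTp}[\Theta_A^j](cd_f,c'd_f)$ is established there by explicit maps $\varphi,h$ of the approximations together with a transversality check. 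But that identity only holds for twisting classes of the form $d_f=c(f^*L^*)$ with $f$ holomorphic, which forces $d_f$ to be a positive class; in particular the natural choice $d_f=1/c=c(Q_n^*)$ is \emph{not} realisable, and this positivity obstruction is the real content of the proof. The paper gets around it by twisting $Q_n^*$ with a high power $S^{\otimes\alpha}$ of an ample line bundle to make it globally generated, computing $c(Q_n^*\otimes S^{\otimes\alpha})=c(Q_n^*)+\alpha P(\alpha)$, and then comparing Schur coefficients of an identity valid for all large $\alpha$ to kill the $\alpha$-dependent terms. Your power-sum argument (``set the adjoined roots so that the power sums cancel'') is the formal shadow of ``take $d_f=1/c$,'' i.e.\ it silently assumes the twisting data can be chosen freely, which is precisely what fails holomorphically; your proposal never confronts this.

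Two secondary problems. First, the splitting $TM'\cong\pi_M^*(TM\oplus f^*W)$ is false for the projective compactification $\mathbb{P}(f^*W\oplus\underline{\C})$: the relative tangent bundle is governed by the Euler sequence and involves the hyperplane class, so your stabilisation identity does not read off directly from $c(TM')$ and $\tilde f^*c(TN')$; it can be repaired (e.g.\ by restricting to the zero section, where the hyperplane class vanishes), but as written the key formula is wrong. Second, the claim $\Theta_A(\tilde f)=\pi_M^{-1}\Theta_A(f)$ together with the applicability of the complex Thom principle to $\tilde f$ needs the statement that the expected codimension of the $\Theta_A$-locus depends only on $l=\opn{dim}N-\opn{dim}M$; this stability is itself part of what is being proved (it is the content of the paper's first lemma via the maps $(V_1,V_2)\mapsto(V_1\oplus\C,V_2\oplus\C)$), so it should be established before, not assumed inside, your genericity remark.
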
\par
The universal polynomial $\widetilde{\opn{Tp}}_A^l(c_1(f),c_2(f),\mydots) \in \mathbb{R}[c_1(f),c_2(f),\mydots]$ has an important structural property: it is positive when expressed in the geometrically natural Schur basis of $\mathbb{R}[c_1(f),c_2(f),\mydots]$ (we will discuss the geometric significance of this polynomial basis in Section \ref{Pos}). \par
The Schur polynomials form an important basis of the ring of symmetric polynomials parametrized by partitions. Given an integer partition $\lambda=(\lambda_1,\mydots,\lambda_n)$ such that $\lambda_1\geq \lambda_2\geq \mydots \geq \lambda_n>0$ define the conjugate partition $\lambda^*=(\lambda_1^*,\mydots,\lambda_m^*)$ by taking $\lambda_i^*$ to be the $\text{largest}\ j\ \text{such that}\ \lambda_j\geq i.$ Let us denote by $s_{\lambda} (b_1,\mydots, b_n)$ the expression of Schur polynomials in elementary symmetric polynomials:
$$s_{\lambda}(b_1,\mydots,b_n)=\opn{det} \{b_{\lambda_i^* +j -i}\}_{i,j=1}^{n}.$$\par
The second result for which we give a new proof is the following theorem:
\begin{theorem}[Pragacz, Weber, \cite{Prag}]
	Let $A$ be a finite-dimensional commutative algebra and let $m,\ l$ be non-negative integers. Let $M$ and $N$ be compact complex manifolds such that $\opn{dim}(N)-\opn{dim}(M)=l$ and let $f\colon M\rightarrow N$ be a sufficiently generic holomorphic map. The Thom polynomial of $\Theta_A (f)$ expressed in $c_i (f)$ has positive coefficients in the Schur basis: $$\widetilde{\opn{Tp}}_A^l(c_1(f),c_2(f),\mydots)=\sum \alpha_{\lambda} s_{\lambda}(c_1 (f),c_2(f),\mydots)$$
	where $\alpha_{\lambda}\geq 0.$
\end{theorem}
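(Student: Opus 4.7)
The plan is to realize the Thom polynomial as the cohomology class of a geometrically effective cycle on a Grassmannian, and then deduce Schur-positivity from the classical positivity of effective classes in the Schubert basis via Kleiman transversality.

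First, I would exploit Damon's theorem (established earlier) to reduce the problem to a universal computation: $\widetilde{\opn{Tp}}_A^l$ is determined by its value on any one sufficiently versal family of maps between complex manifolds with $\opn{dim}(N)-\opn{dim}(M)=l$. The variables $c_i(f)$ are by definition the Chern classes of the virtual bundle $f^*TN - TM$ of virtual rank $l$, so it suffices to choose a test situation in which this virtual bundle is represented in a controlled way on a Grassmannian.

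Next, I would work on a Grassmannian $G = \opn{Gr}(l, V)$ for $V$ of very large dimension, with its universal rank-$l$ quotient bundle $Q$, and build a universal $\Theta_A$-locus $Z_A \subset G$ (or a cycle pushed forward from a suitable flag bundle over $G$) whose fundamental class represents the Thom polynomial. Under this identification the variables $c_i(f)$ correspond to $c_i(Q)$, and the Schur polynomials $s_\lambda(c_\bullet(Q))$ are exactly the Schubert classes $\sigma_\lambda \in H^{2|\lambda|}(G,\mathbb{Z})$. On the Grassmannian these classes form a basis of the cohomology ring, Poincar\'e-dual (up to conjugation of the partition) to the opposite Schubert classes $\sigma_\lambda^{\vee}$, each of which is represented by an effective subvariety. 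Writing $[Z_A] = \sum_\lambda \alpha_\lambda \sigma_\lambda$ and pairing with $\sigma_\lambda^{\vee}$ isolates $\alpha_\lambda$ as an intersection number; by Kleiman's transversality theorem applied to a generic translate of the opposite Schubert variety, this intersection number counts geometric points with positive multiplicities, hence $\alpha_\lambda \geq 0$.

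The main obstacle is the construction in the second step: one needs a genuinely effective geometric model of the universal $\Theta_A$-locus on (or above) a Grassmannian. The contact classification is defined pointwise on the jet space, so one has to show that the stratum $\{A_f(p) \cong A\}$ is a locally closed, reduced subvariety of a suitable jet bundle, that its closure is an irreducible cycle, and that pushing forward from the jet bundle to $G$ preserves effectivity. Resolving $\Theta_A$ by a flag-bundle construction, in the spirit of Pragacz's original desingularization, is the technical heart of the argument; once this is in place, the positivity of the Schubert expansion is a formal consequence of Kleiman's theorem and the standard description of Schur polynomials as Schubert classes.
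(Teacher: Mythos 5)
Your overall skeleton is the same as the paper's: reduce via Damon's theorem to the expansion in relative classes, realize that expansion as the class of an effective cycle on a Grassmannian, and extract each coefficient as an intersection number with a complementary Schubert cycle, made non-negative by Kleiman's transversality theorem. But the step you defer as ``the technical heart'' --- producing a genuinely effective cycle on a Grassmannian whose class is the Thom polynomial --- is exactly the content of the proof, and you do not supply it; invoking a flag-bundle desingularization of $\Theta_A$ in the spirit of Pragacz is a placeholder, not an argument. The paper closes this gap without any desingularization: fix $V_0\in\opn{Gr}(n,N)$, pull back the Borel model along $h(V)=(V_0,V)$, observe that $EG_N\times_G J_d^{n,k}\cong\bigl(\bigoplus_{i=1}^d\opn{Sym}^iL_n\bigr)\boxtimes L_k^*$, so that $h^*(E)\cong\opn{Triv}_{\binom{d+n}{n}-1}\otimes L_k^*$ is globally generated; the parametric transversality theorem (applied stratum by stratum to $\varphi^{-1}(\Sigma)$) then yields a holomorphic section $s$ transverse to $\varphi^{-1}(\Sigma)$, and $X=s^{-1}\varphi^{-1}(\Sigma)$ is the analytic cycle in $\opn{Gr}(k,N)$ with $\opn{PD}[X]=h^*\opn{Tp}[\Theta_A^{n,k}](c,c')=\sum_\lambda\alpha_{0\lambda}s_\lambda(c')$. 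If you want your route to work, you must carry out an equivalent construction in full.

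There is also a concrete defect in your choice of test model. You propose $G=\opn{Gr}(l,V)$ with its rank-$l$ tautological quotient $Q$ and the identification $c_i(f)\leftrightarrow c_i(Q)$. But $c_i(f)$ are Chern classes of a \emph{virtual} bundle of rank $l$ and need not vanish for $i>l$ --- e.g.\ the Thom polynomial of $\Sigma^1$ is $c_{l+1}(f)$ --- whereas $c_i(Q)=0$ for $i>l$, and more generally every $s_\lambda$ with a part exceeding $l$ is killed on your model. Hence the evaluation map has a large kernel and cannot isolate (or even detect) the coefficients $\alpha_\lambda$ for precisely the partitions that typically occur. One needs the relative classes realized by an honest bundle of arbitrarily large rank; the paper achieves this by freezing the source factor at the point $V_0$, so that the source classes become $1$ and $c'/c$ becomes the total Chern class of $L_k^*$ with $k\gg0$, and the Schur polynomials $s_\lambda(c')$ are then genuine Schubert classes of $\opn{Gr}(k,N)$ for all partitions in the stable range.
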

\section{Preliminaries}
\subsection{Singularity theory}
Let $z_1,\mydots, z_n$ be the standard coordinates on $\C^n.$ Denote by $J^n$ the algebra of formal power series in $z_1,\mydots, z_n$ without a constant term, i.e.
$$J^n=\{h\in \C[[z_1,\mydots,z_n]]\ |\ h(0)=0\}.$$\par
The space of $d$-\textit{jets} of holomorphic functions on $\C^n$ near the origin is the quotient of $J^n$ by the ideal of series with the lowest order term of degree at least $d+1$, i.e. the ideal generated by monomials $z_1^{i_1}\mydots z_n^{i_n}$ such that $\sum i_j =d+1.$ We will denote this ideal by $I\langle \overbar{z}^{d+1} \rangle:$
$$J_d^n=J^n/I\langle \overbar{z}^{d+1} \rangle.$$	\par
As a linear space, the algebra $J_d^n$ may be identified with the space of polynomials in $z_1,\mydots,z_n$ of degree at most $d$ without a constant term.
The space of $d$-jets of holomorphic maps from $(\C^n,0)$ to $(\C^k,0)$, or the space of \textit{map-jets}, is denoted by $J_d^{n,k}$ and is naturally isomorphic to $J_d^n\otimes \C^k.$ In this paper we will assume $n\leq k.$ \par
Composition of map-jets together with cancellation of terms of degree greater than $d$ gives a well-defined map
$$J_d^{n,k}\times J_d^{k,m}\longrightarrow J_d^{n,m}$$
$$(\Psi , \Phi )\mapsto  \Psi \circ \Phi.$$\par
Consider a sequence of natural maps $$J_d^{n,k}\rightarrow J_{d-1}^{n,k}\rightarrow \mydots \rightarrow J_1^{n,k}\cong \opn{Hom} (\mathbb{C}^n,\mathbb{C}^k).$$ 
For $\Psi \in J_d^{n,k},$ the \textit{linear part} of $\Psi$ is defined as the image of $\Psi$ in $J_1^{n,k}:$  $\opn{Lin}\Psi=\opn{Im}\Psi \in J_1^{n,k}$.\par
 Consider the set 
$$\opn{Diff}_d^n=\{\Delta \in J_d^{n,n}\ |\ \opn{Lin}\Delta \ \text{invertible}\}.$$ Previously defined operation $"\circ"$ gives this set an algebraic group structure.\par
Let $\Delta_n \in \opn{Diff}_d^n,\ \Delta_k \in \opn{Diff}_d^k,$ and $\Psi \in J_d^{n,k}.$ The \textit{left-right} action of $\opn{Diff}_d^n\times \opn{Diff}_d^k$ on $J_d^{n,k}$ is given by
$$(\Delta_n,\Delta_k)\Psi=\Delta_n \circ \Psi \circ \Delta_k^{-1}.$$\par
\begin{definition}
Left-right invariant algebraic subsets of $J_d^{n,k}$ are called \textit{singularities}.	
\end{definition}
To a given element $\Psi\in J_d^{n,k}\cong J_d^n\otimes \C^k$, presented as $(\Psi_1,\mydots,\Psi_k)$, $\Psi_i\in J_d^n,$ we can associate an algebra $A_{\Psi}=J_d^n/I\langle P_1,\mydots,P_k\rangle.$ This algebra is \textit{nilpotent}: there exists a natural number $m$ such that $A_{\Psi}^m=0,$ in other words, a product of any $m$ elements of $A_{\Psi}$ is equal to $0.$ $A_{\Psi}$ is nilpotent because $J_d^n$ itself is nilpotent: $(J_d^n)^{d+1}=0.$
\begin{definition}
	Suppose $A$ is a nilpotent algebra. The subset 
$$\Theta_A^{n,k}=\{\Psi \in J_d^{n,k}\ |\ A_{\Psi}\cong A\}$$
is called a \textit{contact singularity}.
\end{definition}
\begin{proposition}[\cite{Arn}]
Let $A$ be a nilpotent algebra: $A^{d+1}=0.$ For $n\geq \opn{dim}(A/A^2)$ and $k$ sufficiently large, $\Theta_A^{n,k}$ is a non-empty, left-right invariant, irreducible quasi-projective algebraic subvariety of $J_d^{n,k}.$	
\end{proposition}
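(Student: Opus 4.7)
My plan is to verify the four assertions — left-right invariance, non-emptiness, quasi-projectivity, and irreducibility — in sequence, with the main work in the algebraic-geometric portion. Throughout, I write $I_\Psi$ for the ideal $I\langle \Psi_1,\ldots,\Psi_k\rangle \subset J_d^n$.

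\emph{Invariance and non-emptiness.} I would first check that post-composition of $\Psi$ with $\Delta_k \in \opn{Diff}_d^k$ replaces $(\Psi_1,\ldots,\Psi_k)$ by a formally invertible combination of themselves, leaving $I_\Psi$ unchanged, while pre-composition with $\Delta_n^{-1}$ transports $I_\Psi$ by the induced algebra automorphism of $J_d^n$; either operation preserves the isomorphism class of $A_\Psi = J_d^n/I_\Psi$. For non-emptiness, set $r = \dim(A/A^2)$. By Nakayama's lemma (which is trivial for the nilpotent $A$), any lifts $a_1,\ldots,a_r \in A$ of a basis of $A/A^2$ generate $A$ as a $\C$-algebra. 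Provided $n \geq r$, the rule $\phi(z_i)=a_i$ for $i\leq r$ and $\phi(z_i)=0$ otherwise defines a surjective algebra homomorphism $\phi \colon J_d^n \to A$, well-defined because $(J_d^n)^{d+1} = 0$ maps to $A^{d+1} = 0$. Since $J_d^n$ is Noetherian, $\ker \phi$ is generated by finitely many $P_1,\ldots,P_s$, and for any $k \geq s$ the map-jet $\Psi = (P_1,\ldots,P_s, 0,\ldots, 0) \in J_d^{n,k}$ gives $A_\Psi \cong A$.

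\emph{Algebraic structure.} Next, let $D = \dim_\C J_d^n$ and $r = \dim_\C A$. The dimension $\dim I_\Psi = D - \dim A_\Psi$ equals the rank of an explicit $\C$-linear map whose matrix has polynomial entries in the coefficients of $\Psi$, so the stratum $V_r = \{\Psi : \dim A_\Psi = r\}$ is locally closed in $J_d^{n,k}$; on $V_r$ the assignment $\Psi \mapsto I_\Psi$ defines a morphism to the closed subvariety of ideals inside the Grassmannian $\opn{Gr}(D-r, J_d^n)$. The subset $X_A$ of ideals whose quotient is isomorphic to $A$ is a single orbit of the algebraic group $\opn{Aut}(J_d^n) \cong \opn{Diff}_d^n$, and since orbits of algebraic group actions are locally closed, $\Theta_A^{n,k}$ — the preimage of $X_A$ in $V_r$ — is a quasi-projective subvariety of $J_d^{n,k}$.

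\emph{Irreducibility and principal obstacle.} The group $\opn{Diff}_d^n$, realised as an affine-space bundle over the connected $\opn{GL}_n(\C)$ via the linear-part projection, is connected, so its orbit $X_A$ is irreducible. The fiber of $\Theta_A^{n,k} \to X_A$ over $I \in X_A$ consists of the $k$-tuples in $I$ that generate it — an open subset of the affine space $I^k$, non-empty for $k$ sufficiently large and irreducible. Irreducibility of $\Theta_A^{n,k}$ then follows from the standard fact that a variety fibered over an irreducible base with constant-dimensional irreducible fibers is itself irreducible. The subtlest step — and the principal obstacle — is the claim that $X_A$ is a single $\opn{Diff}_d^n$-orbit: any algebra isomorphism $J_d^n/I \xrightarrow{\sim} J_d^n/I'$ must lift to an automorphism of $J_d^n$. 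I would establish this by choosing lifts in $J_d^n$ of the images of $z_1,\ldots,z_n$ and applying Nakayama a second time to ensure that the resulting endomorphism is surjective.
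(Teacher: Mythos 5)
The paper itself gives no proof of this proposition (it is quoted from \cite{Arn}), so I am judging your argument on its own terms; your overall route --- contact invariance, an explicit map-jet realizing $A$ for non-emptiness, the constant-rank stratification $V_r$ with the morphism $\Psi\mapsto I_\Psi$ to the locus of ideals in the Grassmannian, and the description of $X_A$ as an $\operatorname{Aut}(J_d^n)$-orbit --- is the standard one, and the invariance and non-emptiness parts are fine. The genuine gap is exactly at the step you flag as the principal obstacle, and the sentence you offer does not close it. If $\theta\colon J_d^n/I\to J_d^n/I'$ is an isomorphism and you choose lifts $w_i\in J_d^n$ of $\theta(\bar z_i)$, Nakayama does not ``ensure'' that $z_i\mapsto w_i$ is surjective: it only reduces surjectivity to surjectivity modulo $(J_d^n)^2$, and for naive lifts that genuinely fails whenever $\dim(A/A^2)<n$. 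Concretely, in $J_d^2$ take $I=I\langle z_2\rangle$, $I'=I\langle z_1\rangle$ and $\theta(\bar z_1)=\bar z_2$, $\theta(\bar z_2)=0$; lifting $0$ to $0$ gives $z_1\mapsto z_2$, $z_2\mapsto 0$, which is not an automorphism, and no appeal to Nakayama repairs it. The correct argument must use the freedom of changing each lift by an element of $I'$: the images of the $\theta(\bar z_i)$ span $(J_d^n/J^2)/\bar I'$, where $\bar I'=(I'+J^2)/J^2$ has dimension exactly $n-\dim(A/A^2)$, so one can keep $\dim(A/A^2)$ of the lifts independent modulo $\bar I'$ and adjust the remaining $n-\dim(A/A^2)$ lifts by a basis of $\bar I'$ (i.e.\ by elements of $I'$) so that the $w_i$ form a basis of $J_d^n/J^2$; only then does nilpotence give an automorphism $\Phi$, with $\Phi(I)=I'$ by the codimension count. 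Note that this lemma underlies both your quasi-projectivity claim (local closedness of the orbit $X_A$) and your irreducibility claim, so as written both rest on an unproved step.

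A secondary problem: the ``standard fact'' you invoke for irreducibility is false without a properness or local-triviality hypothesis. For example, $X=V(yz-1)\cup V(y,w)\subset\C^3\to\C$, $(y,z,w)\mapsto y$, is surjective onto an irreducible base with every fiber irreducible of dimension $1$, yet $X$ is reducible. In your situation the repair is easy and stays inside your setup: the incidence variety $\{(I,\Psi)\ |\ I\in X_A,\ \Psi\in I^k\}$ is the restriction to $X_A$ of the direct sum of $k$ copies of the tautological subbundle of $\operatorname{Gr}(D-r,J_d^n)$, hence irreducible; the generating tuples form a nonempty open subset of it; and $\Psi\mapsto(I_\Psi,\Psi)$ is an isomorphism of $\Theta_A^{n,k}$ onto that open subset, which yields irreducibility. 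With the lifting lemma proved as above and this substitution, your proof goes through.
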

\subsection{Equivariant Poincar\'e dual}
Suppose a topological group $G$ acts continuously on a topological space $M,$ and $Y$ is a closed $G$-invariant subvariety in $M.$ In this section we will define an analog of a Poincar\'e dual of $Y$, which reflects the $G$-action: the equivariant Poincar\'e dual of $Y$.\par
Let $G$ be a topological group and let $\pi \colon EG \rightarrow BG$ be the \textit{universal $G$-bundle}, i.e. a principle $G$-bundle such that if $p\colon E \rightarrow B$ is any principle $G$-bundle, then there is a map $\zeta \colon B \rightarrow BG$ unique up to homotopy and $E\cong \zeta^* EG.$ The universal $G$-bundle exists, is unique up to homotopy and can be constructed as a principle $G$-bundle with contractible total space.\par 
Now we can construct the space with a free $G$-action and the same homotopy type as a fixed before topological space $M$, the \textit{Borel construction}:
\begin{definition}
	The \textit{Borel construction} (also homotopy quotient or homotopy orbit space) for a topological group $G$ acting on a topological space $M$ is the space $EG\times_G M,$ i.e. the factor of $EG\times M$ by the diagonal action: $(gx,gy)\sim (x,y),$ where $g\in G, x\in EG, y \in M.$
\end{definition}
\begin{definition}
	The \textit{equivariant cohomology} of $M$ is the ordinary cohomology for the Borel construction:
	$$H_G^*(M)=H^*(EG\times_G M).$$
\end{definition}\par
Note that since $(EG\times pt )/G=EG/G=BG,$ the equivariant cohomology of a point is  $H_G^*(pt)=H^*(BG).$\par
We would like to define an analog of a Poincar\'e dual in the equivariant case, i.e. when a group $G$ acts on a space $M$ and $Y\subset M$ is a closed $G$-invariant subvariety. We constructed a 'substitute' for the orbit space of $G$-action on $M$: the Borel construction $EG\times_G M.$ Now, $EG\times_G Y$ is again a $G$-invariant subvariety of $EG\times_G M,$ and we want to define a dual of $EG\times_G Y$ in $H^*(EG\times_G M)\cong H_G^*(M). $ However, first we have to deal with the fact that $EG$ is usually infinite-dimensional by introducing an approximation.\par 
\begin{lemma}[\cite{And}]
	Suppose $E_m$ -- any connected space with a free $G$-action, such that  $H^i E_m=0$ for $0<i<k(m),$ where $k(m)$ is some integer. Then for any $M,$ there are natural isomorphisms $$H^i(E_m\times_G M)\cong H^i(EG\times_G M)$$
	for $i<k(m).$
\end{lemma}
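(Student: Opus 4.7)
The plan is to compare both sides to a common \emph{doubled Borel construction} $(E_m \times EG) \times_G M$, where $G$ acts diagonally on $E_m \times EG \times M$. Since $G$ already acts freely on each of $E_m$ and $EG$, the diagonal action is free, so $E_m \times EG \to (E_m \times EG)/G$ is a principal $G$-bundle. Viewing $M$ as a $G$-space and forming the associated bundles in two different ways, the two coordinate projections assemble into fiber bundles
\[
EG \longrightarrow (E_m \times EG) \times_G M \xrightarrow{\ p_1\ } E_m \times_G M,
\]
\[
E_m \longrightarrow (E_m \times EG) \times_G M \xrightarrow{\ p_2\ } EG \times_G M,
\]
where $p_1$ forgets the $EG$-factor and $p_2$ forgets the $E_m$-factor. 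The first step is to check carefully that the fibers are what they are claimed to be, by using freeness of the $G$-action to choose unique lifts of base points.

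Next I would apply the Leray--Serre spectral sequence to each fibration. For $p_1$, the fiber $EG$ is contractible, so $E_2^{p,q} = H^p(E_m \times_G M;\, H^q(EG))$ is concentrated in the row $q=0$, and the spectral sequence collapses to give $p_1^* \colon H^i(E_m \times_G M) \xrightarrow{\ \cong\ } H^i((E_m \times EG) \times_G M)$ for every $i$. For $p_2$, the fiber $E_m$ is connected and satisfies $H^q(E_m) = 0$ for $0 < q < k(m)$ by hypothesis. Therefore $E_2^{p,q} = H^p(EG \times_G M;\, H^q(E_m))$ vanishes in the horizontal strip $0 < q < k(m)$, and no differential of the spectral sequence can enter or leave the row $q = 0$ in total degree $i < k(m)$. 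The edge homomorphism $p_2^*$ is thus an isomorphism in that range. Composing with the inverse of $p_1^*$ produces the claimed isomorphism $H^i(EG \times_G M) \cong H^i(E_m \times_G M)$ for $i < k(m)$.

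Naturality in $M$ follows from the functoriality of the Borel construction and of the Serre spectral sequence: a continuous $G$-equivariant map $M \to M'$ induces a map of the whole diagram and hence commutes with the edge homomorphisms $p_1^*$ and $p_2^*$. The main obstacle I expect is the geometric setup: once one verifies that both projections really are locally trivial bundles with the indicated fibers, and that the resulting isomorphism does not depend on the auxiliary choice of $E_m$ (any two choices are compared by a third doubled construction, and the contractibility of $EG$ makes the relevant classifying maps homotopic through $G$-maps), the vanishing-range hypothesis on $H^*(E_m)$ yields the conclusion by a routine edge-map argument in the Serre spectral sequence.
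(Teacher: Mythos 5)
Your proposal is correct and coincides with the argument the paper implicitly relies on: the paper offers no proof of this lemma, citing Anderson \cite{And}, whose proof is exactly your comparison of the two fiber bundles $(E_m\times EG)\times_G M\to E_m\times_G M$ (fiber $EG$, contractible) and $(E_m\times EG)\times_G M\to EG\times_G M$ (fiber $E_m$, cohomology vanishing below $k(m)$) via the Leray--Serre spectral sequence and the edge homomorphisms. The vanishing-range bookkeeping and the naturality statement in your write-up are as in the cited source, so there is nothing to add.
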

Let us fix $EG,$ $BG$ and the approximations $$EG_1\subset EG_2\subset \mydots \subset EG$$ together with $BG_i=EG_i/G.$ We can now consider $EG_m\times_G Y\subset EG_m\times_G M$ -- two finite dimensional spaces. Let $D$ be the codimension of $EG_m\times_G Y$ in $EG_m\times_G M.$\par
Every irreducible closed subvariety of a non-singular variety has a well-defined Borel-Moore homology class \cite{Ful}, so we can define the \textit{equivariant Poincar\'e dual} of $Y$ as follows:
$$\opn{eP}(Y)=[EG_m\times_G Y]_{BM}\in H^{2D}(EG_m\times_G M)=H_G^{2D}(M)$$
for $m$ large enough.\par 
\subsection{The Thom polynomial}
We want to study the equivariant Poincar\'e dual of a closure of a singularity $\overbar{\Theta}\subset J_d^{n,k}.$ It is convenient to restrict the action of $\opn{Diff}_d^n\times \opn{Diff}_d^k$ on $J_d^{n,k}$ to the action of $G=\opn{Gl}_n\times \opn{Gl}_k.$ \par
First, we need to fix $EG,$ $BG$ and the corresponding approximations. Recall that $\C^{\infty}$ is defined as $\{(z_1,z_2,\mydots)\ |\ z_i \in \C,\ \text{only finite number of}\ z_i\ \text{is non-zero} \} $. Fix $E \opn{Gl}_n=\opn{Fr}(n,\infty),$ the manifold of $n$-frames of orthonormal vectors in $\C^{\infty},$ and $B\opn{Gl}_n=\opn{Gr}(n,\infty),$ the Grassmannian of $n$-planes in $\C^{\infty}.$ So, in our case $EG=\opn{Fr}(n,\infty)\times \opn{Fr}(k,\infty)$ and $BG=\opn{Gr}(n,\infty)\times \opn{Gr}(k,\infty).$ The approximations are given by $EG_i=\opn{Fr}(n,i)\times \opn{Fr}(k,i)$ and $BG_i=\opn{Gr}(n,i)\times \opn{Gr}(k,i).$\par
By definition, $$\opn{eP}(\overbar{\Theta})\in H_G^{*}(J_d^{n,k})=H^{*}(BG)=H^*(\opn{Gr}(n,\infty)\times \opn{Gr}(k,\infty)),$$
since $J_d^{n,k}$ is contractible.\par
Let $L_n$ denote the tautological vector bundle over $\opn{Gr}(n,\infty),$ i.e. $$\opn{Gr}(n,\infty)\times \C^{\infty}\supset \{(V,q)\ |\ q\in V\}.$$ Then we can identify $H^*(\opn{Gr}(n,\infty),\C)$ with $\mathbb{C}[c_1,\mydots,c_n],$ where $c_i$ are the Chern classes of $L_n^*$ -- the dual tautological bundle. This observation allows us to define the \textit{Thom polynomial} as follows:
\begin{definition}\label{ThPolDef}
	Let $d,n,k\in \mathbb{N}$ and let $n\leq k.$ Let $\Theta \subset J_d^{n,k}$ be a singularity. The \textit{Thom polynomial} of $\Theta$ is defined as
	$$\opn{Tp}[\Theta](c,c')=\opn{eP}(\overbar{\Theta})\in H^*(\opn{Gr}(n,\infty)\times \opn{Gr}(k,\infty))\cong \mathbb{C}[c_1,\mydots,c_k]\otimes \mathbb{C}[c'_1,\mydots,c'_k],$$
	where $c_i$ are the Chern classes of $L_n^*$ and $c'_i$ -- the Chern classes of $L_k^*.$
\end{definition}\par
The notation $\opn{Tp}[\Theta](c,c')$ comes from the total Chern class: $c=\sum c_i.$\par
The Thom polynomial defined above coincides with the universal polynomial from the Thom's principle due to the following theorem.
\begin{theorem}[\cite{Kaz}]
Let $M$ and $N$ be compact complex manifolds, $\opn{dim}(M)\leq \opn{dim}(N),$ and let $f\colon M\rightarrow N$ be a holomorphic map. If the $\Theta$-locus of $f$ has expected dimension and is reduced, then its fundamental class in $M$ represents the Thom polynomial $\opn{Tp}[\Theta](c,c')$ (defined in \ref{ThPolDef}) evaluated in $c_i (TM)$ and $f^* c_j (TN):$
$$\opn{Tp}[\Theta](c_1(TM),c_2 (TM,\mydots, f^* c_1(TN), f^* c_2 (TN),\mydots).$$
\end{theorem}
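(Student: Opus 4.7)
The plan is to realise both sides of the claimed equality as cohomology classes on $M$ obtained from a single universal class on a classifying space, via transverse pullback along a section of an appropriate jet bundle.

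First, I would build the geometric jet bundle. Let $P \to M$ denote the principal $G = \opn{Gl}_n \times \opn{Gl}_k$-bundle of pairs of frames in $TM$ and $f^*TN$, and choose a classifying map $\phi \colon M \to BG_m$ for $P$ with $m$ so large that the approximation lemma identifies $H^{\leq 2D}_G(J_d^{n,k})$ with $H^{\leq 2D}(EG_m \times_G J_d^{n,k})$, where $D$ is the codimension of $\overbar{\Theta}$. Form the associated bundle
$$E = P \times_G J_d^{n,k} \xrightarrow{\;\pi\;} M,$$
which is canonically the bundle of $d$-jets of maps from $(TM, 0) \to (f^*TN, 0)$. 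The map $f$ itself yields a tautological section $j^d f \colon M \to E$ sending $p \in M$ to the truncated Taylor expansion of $f$ at $p$, expressed in any local frames around $p$ and $f(p)$. Because $\overbar{\Theta}$ is $G$-invariant, it defines a closed subvariety $\overbar{\Theta}_E = P \times_G \overbar{\Theta} \subset E$ of codimension $D$, and by construction $\overbar{\Theta(f)} = (j^d f)^{-1}(\overbar{\Theta}_E)$.

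Second, I would invoke the genericity hypothesis that $\Theta(f)$ has expected dimension and is reduced; this is exactly the statement that $j^d f$ is transverse to $\overbar{\Theta}_E$ along the smooth locus. Transverse pullback in Borel-Moore homology then gives
$$[\overbar{\Theta(f)}] = (j^d f)^*[\overbar{\Theta}_E] \in H^{2D}(M, \mathbb{R}).$$
The class $[\overbar{\Theta}_E]$ is, by the very definition of equivariant Poincar\'e dual, the pullback to $E$ of the universal class $\opn{eP}(\overbar{\Theta}) \in H^{2D}_G(J_d^{n,k})$ along the classifying diagram
$$\begin{array}{ccc} E & \xrightarrow{\;\widetilde{\phi}\;} & EG_m \times_G J_d^{n,k} \\ \pi \downarrow & & \downarrow \\ M & \xrightarrow{\;\phi\;} & BG_m, \end{array}$$
so $[\overbar{\Theta}_E] = \widetilde{\phi}^{\,*}[EG_m \times_G \overbar{\Theta}]$.

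Third, I would collapse the right column. The projection $EG_m \times_G J_d^{n,k} \to BG_m$ is a fibre bundle with contractible fibre $J_d^{n,k}$, hence a homotopy equivalence, under which $[EG_m \times_G \overbar{\Theta}]$ corresponds to $\opn{Tp}[\Theta](c,c') \in H^*(BG_m)$. Any two sections of this bundle over $M$ induce the same map on cohomology; applying this to $\widetilde{\phi}\circ j^d f$ and to the composition of $\phi$ with the zero section yields
$$(j^d f)^* \widetilde{\phi}^{\,*}[EG_m \times_G \overbar{\Theta}] = \phi^* \opn{Tp}[\Theta](c,c').$$
Since $\phi$ classifies $TM \oplus f^*TN$, one has $\phi^* c_i = c_i(TM)$ and $\phi^* c'_j = f^* c_j(TN)$, giving exactly the right-hand side of the theorem.

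\textbf{Main obstacle.} The delicate step is the transversality in item two: one must show that the genericity condition on $f$ forces $j^d f$ to meet $\overbar{\Theta}_E$ transversely, including along the singular strata of $\overbar{\Theta}$, so that the Borel--Moore pullback formula applies with the fundamental class of $\overbar{\Theta(f)}$ on the nose and not merely up to a multiplicity. This is where a Thom-type multijet transversality argument, combined with the left-right invariance of $\overbar{\Theta}$, is needed; everything else is a naturality argument for the equivariant Poincar\'e dual under the classifying diagram.
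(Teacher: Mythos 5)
First, a point of comparison: the paper does not prove this statement at all --- it is quoted from Kazarian, and the text immediately afterwards refers the reader to \cite{BSz} and \cite{Kaz} for the relation between Definition \ref{ThPolDef} and Thom's principle. So your proposal is reconstructing an argument the paper deliberately outsources; the route you choose (universal class on the Borel construction, geometric jet bundle, pullback along the jet section, naturality of the equivariant dual) is indeed the standard one from those references.

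There are, however, two genuine gaps. The first is in your construction of the ``tautological section''. The associated bundle $P\times_G J_d^{n,k}$ built from the \emph{linear} frame bundle of $TM\oplus f^*TN$, with $G=\opn{Gl}_n\times\opn{Gl}_k$, does not carry a well-defined section $j^d f$: the truncated Taylor expansion of $f$ at $p$ depends on a choice of local coordinates, not merely of frames, and two coordinate systems with the same linear part differ by a higher-order jet of a diffeomorphism, which changes the fibre coordinate of the would-be section. The correct construction uses the bundle of $d$-th order frames, with structure group $\opn{Diff}_d^n\times\opn{Diff}_d^k$ --- this is precisely why singularities are required to be left-right invariant --- and only afterwards does one use the retraction of $\opn{Diff}_d$ onto $\opn{Gl}$ to conclude that the $\opn{Gl}_n\times\opn{Gl}_k$-equivariant dual of $\overbar{\Theta}$ computes the pullback; this reduction step is missing from your argument and is exactly the point where the paper's restriction of the action to $\opn{Gl}_n\times\opn{Gl}_k$ needs justification. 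The second gap is the one you flag yourself, but your proposed fix does not address it: ``expected dimension and reduced'' is an assumption on the locus $\Theta(f)$, not a transversality statement of $j^d f$ to the smooth locus of the (typically singular) subvariety $\overbar{\Theta}_E$, and a multijet transversality theorem would only prove that \emph{generic} $f$ satisfies the hypothesis, which is not what is needed. What is needed is the refined intersection-theoretic statement (in the spirit of Fulton's Borel--Moore/Gysin pullback): when the preimage has the expected codimension the pullback class is a cycle supported on $\overbar{\Theta(f)}$ with non-negative multiplicities, and reducedness forces these multiplicities to equal one. Without that ingredient the central equality $[\overbar{\Theta(f)}]=(j^d f)^*[\overbar{\Theta}_E]$ remains unproved. (A smaller point to check is the duality convention: with your classifying map the tautological bundle $L_n$ pulls back to $TM$, so $c_i(L_n^*)$ pulls back to $c_i(T^*M)$, and one must fix conventions so that the substitution really is $c_i\mapsto c_i(TM)$ as in the statement.)
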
\par
In this paper we will think of Thom polynomial as defined in \ref{ThPolDef}. For a detailed discussion of the relation between this definition and the Thom's principle, see \cite{BSz} and \cite{Kaz}.

\section{Damon's theorem}\label{Dam}
Before stating and proving Damon's theorem, let us first discuss the relation between Thom polynomials for different singularities.\par
Suppose $A$ is a nilpotent algebra. Fix $k,n,k',n'\in \mathbb{N}$ such that $k\geq n,$ $k'\geq n'$ and $k-n=k'-n'.$ Consider $\Theta_A^{n,k}\subset J_d^{n,k}$ and $\Theta_A^{n',k'}\subset J_d^{n',k'}$ and the corresponding approximations for $N, N' \gg 0$ of the Borel constructions $EG_N\times_G \overbar{\Theta}_A^{n,k} \subset EG_N\times_G J_d^{n,k}$ and ${EG'}_{N'}\times_{G'} \overbar{\Theta}_A^{n',k'} \subset {EG'}_{N'}\times_{G'} J_d^{n',k'}$for $G=\opn{Gl}_n\times \opn{Gl}_k$ and $G'=\opn{Gl}_{n'}\times \opn{Gl}_{k'}.$\par
Suppose $\varphi$ and $h$ in the following diagram are holomorphic.
$$\xymatrix {\Sigma_1=EG_N{\times}_{G}{\overbar{\Theta}}_A^{n,k} \ar @{^{(}->}[r] & EG_N{\times}_{G}\mathcal{J}_d^{n,k}  \ar[r]^-{\pi} \ar[d]^-{h} & \opn{Gr}(n,N)\times \opn{Gr}{(k,N)} \ar[d]^-{\varphi}\\
\Sigma_2=EG'_{N'}{\times}_{G'}{\overbar{\Theta}}_A^{n',k'}  \ar @{^{(}->}[r] & EG'_{N'}{\times}_{G'}\mathcal{J}_d^{n',k'} \ar[r]^-{\pi'} & \opn{Gr}(n',N')\times \opn{Gr}{(k',N')}}$$\par
If the following conditions \cite{FRim} are satisfied:
\begin{itemize}
	\item the square on the right commutes,
	\item $h^{-1}(\Sigma_2)=\Sigma_1$,
	\item $h$ is transversal to the smooth points of $\Sigma_2$,
\end{itemize}
then $h^*\opn{PD}[\Sigma_2]=\opn{PD}[h^{-1}(\Sigma_2)]=\opn{PD}[\Sigma_1]$. From the commutativity of the right square we obtain the equality $$\opn{Tp}[\Theta_A^{n,k}]=\varphi^* \opn{Tp}[\Theta_A^{n',k'}].$$\par
Let now $n'=n+1,\ k'=k+1.$ Define the map $\varphi$ as follows:
$$\varphi \colon \opn{Gr}(n,N)\times \opn{Gr}(k,N)\longrightarrow \opn{Gr}(n+1,N+1)\times \opn{Gr}(k+1,N+1)$$
$$(V_1,V_2)\mapsto (V_1\oplus \C, V_2\oplus C).$$\par
Define $h$ in a similar way. Let $(e_1,e_2,\mydots,e_{N+1})$ be a fixed orthonormal basis of $\C^{N+1},$ and let $(t_1,\mydots,t_n)$ be an orthonormal $n$-frame in $\C^N$ such that $e_{n+1}\notin \langle t_1,\mydots,t_n\rangle.$ Let $(\Psi_1 \mydots,\Psi_k )\in J_d^{n,k},$ i.e. $\Psi_j (z_1,\mydots,z_n)\in J_d^n.$ 
$$h\colon EG_N\times_G J_d^{n,k}\longrightarrow EG'_{N+1}\times_{G'}J_d^{n+1,k+1}$$
$$((t_1,\mydots,t_n),(\Psi_1,\mydots,\Psi_k))\mapsto ((t_1,\mydots,t_n,e_{n+1}),(\Psi_1,\mydots,\Psi_k,z_{n+1}))$$\par
Let us denote the set of Chern classes of the dual tautological bundle $L_n^*$ on $\opn{Gr}(n,N)$ by $c=c_1,\mydots, c_n$, the Chern classes of $L_k^*$ by $c'=c'_1,\mydots,c'_k,$ the Chern classes of $L_{n+1}^*$ on $\opn{Gr}(n+1,N+1)$ by $\overline{c}=\overline{c}_1,\mydots,\overline{c}_{n+1}$ and the Chern classes of $L_{k+1}^*$ by ${\overline{c}}'={\overline{c}'}_1,\mydots,{\overline{c}'}_{k+1}.$ The transversality and the commutativity of the square on the right are straightforward, so the following is true: $$\opn{Tp}[\Theta_A^{n,k}](c,c')=\varphi^* \opn{Tp}[\Theta_A^{n+1,k+1}](\overline{c},\overline{c}').$$\par
We can also show how the pullback of $\varphi$ acts on the Chern classes ${\overline{c}}_i$ and ${\overline{c}'}_i:$
$$\varphi^* ({\overline{c}}_i) = c_i\ \text{for}\ i\leq n\ \text{and}\ \varphi^*({\overline{c}}_{n+1})=0,$$
$$\varphi^* ({\overline{c}'}_i) = c'_i\ \text{for}\ i\leq k\ \text{and}\ \varphi^*({\overline{c}'}_{k+1})=0.$$\par
Using the properties of a pullback map we conclude the following.
\begin{lemma} In the above notations,
$$\opn{Tp}[\Theta_A^{n,k}](c_1,\mydots,c_n,c'_1,\mydots,c'_k)=\opn{Tp}[\Theta_A^{n+1,k+1}](\varphi^*(\overline{c}_1),\mydots,\varphi^* (\overline{c}_{n+1}) ,\varphi^*(\overline{c}'_1),\mydots,\varphi^*(\overline{c}'_k))=$$
$$=\opn{Tp}[\Theta_A^{n+1,k+1}](c_1,\mydots,c_n,0,c'_1,\mydots,c'_k,0)$$
\end{lemma}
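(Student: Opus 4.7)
The plan is to derive the claimed identity as an immediate consequence of the two ingredients that have just been assembled in the preceding paragraphs: first, the equality $\opn{Tp}[\Theta_A^{n,k}](c,c')=\varphi^*\opn{Tp}[\Theta_A^{n+1,k+1}](\overline{c},\overline{c}')$ that follows from transversality of $h$ and commutativity of the right-hand square of the diagram, and second, the explicit description of how $\varphi^*$ acts on the Chern generators of the target cohomology ring.

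First, I would invoke the identity $\opn{Tp}[\Theta_A^{n,k}](c,c')=\varphi^*\opn{Tp}[\Theta_A^{n+1,k+1}](\overline{c},\overline{c}')$, which gives the left-hand equality of the lemma up to the observation that $\varphi^*$ may be pushed inside the polynomial expression. This observation is the core step: since $\varphi^*$ is a pullback in cohomology, it is a ring homomorphism on $H^*\bigl(\opn{Gr}(n+1,N+1)\times\opn{Gr}(k+1,N+1)\bigr)$, so it commutes with sums and products. Because $\opn{Tp}[\Theta_A^{n+1,k+1}](\overline{c},\overline{c}')$ is by construction a polynomial in the generators $\overline{c}_1,\dots,\overline{c}_{n+1},\overline{c}'_1,\dots,\overline{c}'_{k+1}$, the ring-homomorphism property yields
\begin{equation*}
\varphi^*\opn{Tp}[\Theta_A^{n+1,k+1}](\overline{c},\overline{c}')
=\opn{Tp}[\Theta_A^{n+1,k+1}]\bigl(\varphi^*(\overline{c}_1),\dots,\varphi^*(\overline{c}_{n+1}),\varphi^*(\overline{c}'_1),\dots,\varphi^*(\overline{c}'_{k+1})\bigr),
\end{equation*}
which is the first displayed equality of the lemma.

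For the second equality, I would simply substitute the values of $\varphi^*$ on the Chern classes that were computed in the preceding paragraph, namely $\varphi^*(\overline{c}_i)=c_i$ for $i\leq n$, $\varphi^*(\overline{c}_{n+1})=0$, $\varphi^*(\overline{c}'_i)=c'_i$ for $i\leq k$, and $\varphi^*(\overline{c}'_{k+1})=0$. Inserting these into the previous display produces $\opn{Tp}[\Theta_A^{n+1,k+1}](c_1,\dots,c_n,0,c'_1,\dots,c'_k,0)$, which is the right-hand side of the lemma. There is no real obstacle here; the only point that needs even mild care is to make sure the statement ``$\varphi^*$ commutes with polynomial expressions in the generators'' is explicitly invoked, since it is the sole conceptual step, and to list the substitutions in the correct order matching the convention used for $\opn{Tp}[\Theta_A^{n+1,k+1}]$.
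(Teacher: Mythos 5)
Your proposal is correct and follows essentially the same route as the paper, which derives the lemma directly from the identity $\opn{Tp}[\Theta_A^{n,k}](c,c')=\varphi^*\opn{Tp}[\Theta_A^{n+1,k+1}](\overline{c},\overline{c}')$ together with the computed values of $\varphi^*$ on the Chern classes, using that the pullback is a ring homomorphism. Your explicit mention of the ring-homomorphism step (and of the missing $\varphi^*(\overline{c}'_{k+1})$ in the substitution list) only makes precise what the paper summarizes as ``the properties of a pullback map.''
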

We can iterate the same procedure for $\opn{Tp}[\Theta_A^{n+2,k+2}],$ $\opn{Tp}[\Theta_A^{n+3,k+3}],$ etc, but since the Thom polynomial has a fixed degree, there will be a stabilization. This conclusion proves that the Thom polynomial depends only on the difference $k-n$ but not on $n$ and $k$, it also allows us to define the notion that generalizes the Thom polynomial.
\begin{definition}
	Fix a nilpotent algebra $A$ and the difference between the dimensions of the source and the target of the map-jets, i.e. $n-k$ in our previous notations, denote this number by $j.$ Let $m>\opn{codim}(\overline{\Theta}_A^{n,k})$ in $J_d^{n,k}.$
	Define the \textit{universal Thom polynomial} as
	$$\opn{UTp}[\Theta_A^j](c_1,\mydots,c_m,c'_1,\mydots,c_{m+j})=\opn{Tp}[\Theta_A^{n,n+j}](c_1,\mydots,c_m,c'_1,\mydots,c'_{m+j})$$
	for $n>m.$
\end{definition}\par
For all $n,k$ such that $k-n=j$ we obtain
$$\opn{Tp}[\Theta_A^{n,k}](c_1,\mydots,c_n,c'_1,\mydots, c'_k)=\opn{UTp}[\Theta_A^{j}](c_1,\mydots,c_n,0,\mydots,0,c'_1,\mydots,c'_k,0,\mydots,0).$$\par
Let us show an important property of the universal Thom polynomial. Let $$f\colon \opn{Gr}(n,N)\longrightarrow \opn{Gr}(l,M)$$ be any holomorphic map.
Consider the diagram:
$$\xymatrix { EG_N{\times}_{G}\mathcal{J}_d^{n,k}  \ar[r]^-{\pi} \ar[d]^-{h} & \opn{Gr}(n,N)\times \opn{Gr}{(k,N)} \ar[d]^-{\varphi}\\
EG'_{N+M}{\times}_{G'}\mathcal{J}_d^{n+l,k+l} \ar[r]^-{\pi'} & \opn{Gr}(n+l,N+M)\times \opn{Gr}{(k+l,N+M)}}$$\par
Define $\varphi$ as
$$\varphi (V_1,V_2)=(V_1 \oplus f(V_1),V_2 \oplus f(V_1)),\ V_1\in \opn{Gr}(n,N),\ V_2\in \opn{Gr}(k,N).$$\par
Let $(e_1,\mydots,e_n)$ be the orthonormal basis for $V_1,$ $(e'_1,\mydots,e'_k)$ -- the orthonormal basis for $V_2,$ and $(\overline{e}_1,\mydots,\overline{e}_l)$ -- the orthonormal basis for $f(V_1).$ Let $\Psi=(\Psi_1,\mydots, \Psi_k)\in J_d^{n,k}.$ Define $h$ as follows:
$$h[(e_1,\mydots,e_n,e'_1\mydots,e'_k),\Psi]=$$ $$=[(e_1,\mydots,e_n,\overline{e}_1,\mydots,\overline{e}_l,e'_1,\mydots,e'_k,\overline{e}_1,\mydots,\overline{e}_l),(\Psi_1,\mydots,\Psi_k,z_{k+1},\mydots,z_{k+l})]$$\par
Let $c$ be the total Chern class of $L_n^*,$ $c'$ -- the total Chern class of $L_k^*,$ and $d_f$ -- the total Chern class of $f^*(L_{l}^*).$ We have the following formulae for the pullbacks:
$$\varphi^* c(L_{n+l}^*)=c(L_n^*\oplus f^* L_l^*)=cd_f$$
$$\varphi^* c(L_{k+l}^*)=c(L_k^*\oplus f^* L_l^*)=c'd_f$$\par
On the level of the universal Thom polynomials we obtain the following.
\begin{lemma}
In the above notations,
$$\opn{UTp}[\Theta_A^j](c,c')=\opn{UTp}[\Theta_A^j] (cd_f,c'd_f).$$
\end{lemma}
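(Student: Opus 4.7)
The strategy is to apply the Fehér–Rimányi pullback principle recalled at the beginning of Section~\ref{Dam} to the diagram displayed just above the lemma, and then translate the resulting identity between Thom polynomials into one between universal Thom polynomials using the pullback formulas $\varphi^* c(L_{n+l}^*) = cd_f$ and $\varphi^* c(L_{k+l}^*) = c'd_f$. Observe first that the codimension index $j=k-n$ is preserved by the construction, since $(k+l)-(n+l)=k-n$, so the universal Thom polynomial appearing on both sides of the claimed identity is the same object $\opn{UTp}[\Theta_A^j]$.

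I would then verify the three hypotheses of the pullback principle for the given diagram. Commutativity of the right square is built into the definitions of $\varphi$ and $h$: projecting $h((e_i,e'_j),\Psi)$ onto the Grassmannian factor yields $(V_1\oplus f(V_1),V_2\oplus f(V_1))=\varphi(V_1,V_2)$. For the equality $h^{-1}(\Sigma_2)=\Sigma_1$, the key observation is that for any $\Psi=(\Psi_1,\ldots,\Psi_k)\in J_d^{n,k}$ the extended jet $h(\Psi)=(\Psi_1,\ldots,\Psi_k,z_{n+1},\ldots,z_{n+l})\in J_d^{n+l,k+l}$ has the same local algebra:
$$A_{h(\Psi)}=J_d^{n+l}/I\langle \Psi_1,\ldots,\Psi_k,z_{n+1},\ldots,z_{n+l}\rangle\cong J_d^n/I\langle \Psi_1,\ldots,\Psi_k\rangle=A_\Psi,$$
since quotienting $J_d^{n+l}$ by the ideal $I\langle z_{n+1},\ldots,z_{n+l}\rangle$ recovers $J_d^n$. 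Consequently $h(\Psi)\in\overbar{\Theta}_A^{n+l,k+l}$ if and only if $\Psi\in\overbar{\Theta}_A^{n,k}$, and this conclusion transfers to the Borel constructions.

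The step requiring the most care is the transversality of $h$ to the smooth locus of $\Sigma_2$. I expect to argue it by the standard route in this setting: $\overbar{\Theta}_A^{n,k}$ and $\overbar{\Theta}_A^{n+l,k+l}$ have the same codimension, since this codimension depends only on $A$ and $j$, so the set-theoretic identity $h^{-1}(\overbar{\Theta}_A^{n+l,k+l})=\overbar{\Theta}_A^{n,k}$ combined with matching codimensions forces $h$ to be transversal at smooth points, the linearity of the added components $z_{n+1},\ldots,z_{n+l}$ ensuring that $dh$ surjects onto the normal directions to the stratum. Once transversality is granted, the pullback principle gives
$$\opn{Tp}[\Theta_A^{n,k}](c,c')=\varphi^*\opn{Tp}[\Theta_A^{n+l,k+l}](\overbar{c},\overbar{c}').$$
Since $\varphi^*$ is a ring homomorphism sending $c(L_{n+l}^*)$ and $c(L_{k+l}^*)$ to $cd_f$ and $c'd_f$ respectively, the right-hand side equals $\opn{Tp}[\Theta_A^{n+l,k+l}](cd_f,c'd_f)$. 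Finally, the stabilization discussion preceding the definition of $\opn{UTp}[\Theta_A^j]$ rewrites both Thom polynomials as the same universal Thom polynomial (with trailing zeros in the inactive slots), yielding the claimed identity.
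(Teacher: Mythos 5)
Your proposal takes essentially the same route as the paper: the lemma there is exactly the output of the Feh\'er--Rim\'anyi pullback conditions applied to the displayed diagram, combined with the computed pullbacks $\varphi^* c(L_{n+l}^*)=cd_f$, $\varphi^* c(L_{k+l}^*)=c'd_f$ and stabilization, and the paper leaves the verification of the three conditions implicit, so your extra checks (invariance of the local algebra under appending the regular coordinates, transversality) simply fill in what the paper treats as straightforward. One small caveat: transversality is not a formal consequence of ``$h^{-1}(\Sigma_2)=\Sigma_1$ plus equal codimensions'' (tangential intersections can have the right codimension), so the honest justification is the surjectivity of $dh$ onto the normal directions of the stratum --- which you do indicate --- rather than the codimension count.
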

Let us formulate the Damon's theorem.
\begin{theorem}[Damon, \cite{Dam}]
	Let $d,n,k\in \mathbb{N}$ and let $n\leq k.$ Suppose $A$ is a nilpotent algebra and $\Theta_A^{n,k}\subset J_d^{n,k}$ a contact singularity. The Thom polynomial of $\Theta_A^{n,k}$ depends only on the difference $k-n$ and can be expressed in a single set of variables $\tilde{c}$ given by the generating series $$1+\tilde{c}_1 t+\tilde{c}_2 t^2+\mydots = {{\sum_{i=0}^k c'_i t^i}\over{\sum_{j=0}^n c_j t^j}}.$$
\end{theorem}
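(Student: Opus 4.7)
The plan is to deduce the theorem from the two lemmas established earlier in this section, combined with a judicious choice of pullback map that realizes the formal inverse of $c$ as an honest total Chern class of a bundle.

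The first assertion -- that $\opn{Tp}[\Theta_A^{n,k}]$ depends only on $j=k-n$ -- is essentially encoded in the definition of the universal Thom polynomial $\opn{UTp}[\Theta_A^j]$ that precedes this statement. Indeed, the first lemma of this section provides the stabilization identity $\opn{Tp}[\Theta_A^{n,k}](c,c')=\opn{Tp}[\Theta_A^{n+1,k+1}](c,0,c',0)$, and because $\opn{Tp}[\Theta_A^{n,k}]$ has total degree bounded by $2\opn{codim}(\overbar{\Theta}_A^{n,k})$, a quantity governed by $A$ and $j$ alone, iterating the stabilization produces a well-defined polynomial depending only on $A$ and $j$. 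So only the expressibility in the single set of variables $\tilde c$ remains.

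The heart of the proof is an application of the second lemma of the section with a cleverly chosen map $f$. The key observation is that on $\opn{Gr}(n,N)$ the tautological sequence $0\to L_n\to \underline{\C}^N\to Q_n\to 0$ dualizes to $0\to Q_n^*\to \underline{\C}^N\to L_n^*\to 0$, whence $c(Q_n^*)\cdot c(L_n^*)=1$, i.e.\ $c(Q_n^*)=c^{-1}$ in $H^*(\opn{Gr}(n,N))$. I would take $N$ large enough that $H^*(\opn{Gr}(n,N))$ agrees with $H^*(\opn{Gr}(n,\infty))$ in all degrees up to the fixed total degree of $\opn{UTp}[\Theta_A^j]$, and let $f\colon \opn{Gr}(n,N)\to \opn{Gr}(N-n,M)$, for $M$ sufficiently large, be a classifying map of the bundle $Q_n^*$, so that $d_f = c(f^*L_{N-n}^*) = c(Q_n^*) = c^{-1}$. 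Plugging into the second lemma yields
$$\opn{UTp}[\Theta_A^j](c,c')=\opn{UTp}[\Theta_A^j](c\cdot d_f,\ c'\cdot d_f)=\opn{UTp}[\Theta_A^j](1,\ c'/c).$$
The coefficients of the generating series $c'/c$ in the variable $t$ are precisely the $\tilde c_i$ appearing in the theorem, so setting $\widetilde{\opn{Tp}}_A^j(\tilde c_1,\tilde c_2,\mydots):=\opn{UTp}[\Theta_A^j](1,\tilde c_1,\tilde c_2,\mydots)$ completes the reduction to a single set of variables.

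The main obstacle is justifying the substitution $d_f=c^{-1}$ in the second lemma, since that lemma is stated only for $d_f$ coming from a genuine holomorphic map between Grassmannians, not for an abstract formal power series inverse. The resolution is exactly the explicit construction above: the classifying map of $Q_n^*$ realizes $c^{-1}$ as the total Chern class of a bona fide bundle pullback, and choosing $N$ (and then $M$) large enough guarantees that the relations in $H^*(\opn{Gr}(n,N))$ cannot interfere with the polynomial identity up to the relevant degree. Once this is in place, reading off coefficients of the generating series identity is a routine manipulation of symmetric functions.
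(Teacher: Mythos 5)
Your handling of the first assertion (dependence only on $j=k-n$ via the stabilization lemma and the degree bound) is fine and agrees with the paper. The fatal gap is in the main step: you propose to take $f\colon \opn{Gr}(n,N)\to\opn{Gr}(N-n,M)$ to be a \emph{holomorphic} classifying map of $Q_n^*$, so that $f^*L_{N-n}^*\cong Q_n^*$ and $d_f=c(Q_n^*)=c^{-1}$. No such holomorphic map exists, no matter how large $N$ and $M$ are chosen. If $f$ is holomorphic and $f^*L_l^*\cong E$, then $E$ is globally generated (pull back the generating sections of $L_l^*$) and its Chern classes are non-negative combinations of classes of analytic subvarieties; but $c_1(Q_n^*)=-c_1(L_n^*)$ is minus a Schubert class. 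Already on $\opn{Gr}(1,2)\cong\mathbb{P}^1$ one has $Q^*\cong\mathcal{O}(-1)$, which is not a holomorphic pullback of $\mathcal{O}(1)$. Classifying maps for $Q_n^*$ exist only in the continuous category, and the lemma $\opn{UTp}[\Theta_A^j](c,c')=\opn{UTp}[\Theta_A^j](cd_f,c'd_f)$ genuinely requires $f$ holomorphic: its proof goes through the holomorphic maps $h$ and $\varphi$ and the identity $h^*\opn{PD}[\Sigma_2]=\opn{PD}[\Sigma_1]$ for the \emph{singular} varieties $EG\times_G\overbar{\Theta}_A$, which is not available for a merely topological classifying map. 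Enlarging $N$ and $M$ does not help, since global generation of $Q_n^*$ fails for every $N$.

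This is precisely the obstacle the paper's proof is built to circumvent, and your proposal omits that mechanism entirely. The actual argument twists: for an ample line bundle $S$ and $\alpha\gg 0$ the bundle $Q_n^*\otimes S^{\otimes\alpha}$ \emph{is} globally generated, so there is a holomorphic $f_\alpha$ with $f_\alpha^*(L_{n+l_\alpha}^*)=Q_n^*\otimes S^{\otimes\alpha}$ and $d_{f_\alpha}=c(Q_n^*)+\alpha P(\alpha)$. Applying the lemma to $f_\alpha$ gives
$$\opn{UTp}[\Theta_A^j](c,c')=\opn{UTp}[\Theta_A^j](1,c'/c)+\alpha P_2(\alpha),$$
and the $\alpha$-dependent terms are then eliminated by expanding both sides in the Schur basis: the left-hand difference $\opn{UTp}[\Theta_A^j](c,c')-\opn{UTp}[\Theta_A^j](1,c'/c)$ is independent of $\alpha$, while the right-hand side is divisible by $\alpha$ for all large $\alpha$, forcing every coefficient $B_{\lambda\mu}$ to vanish. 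Without this twisting-and-polynomial-in-$\alpha$ argument (or some substitute for realizing $c^{-1}$ in the holomorphic category), your proof does not go through.
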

These new variables are called the \textit{relative Chern classes}. We will denote the Thom polynomial expressed in the relative Chern classes by $\opn{Tp}[\Theta_A^{n,k}](c'/c).$\par
\begin{proof}
The previous discussion implies that if there existed a map $f$ such that $d_f=1/c,$ the Damon's theorem would be proved since
$$\opn{UTp}[\Theta_A^j](c,c')=\opn{UTp}[\Theta_A^j](1,c'/c)=\opn{UTp}[\Theta_A^j](c'/c).$$\par
In fact, such a map does not exist. The equality $c(L^*)=1/c(Q^*)$ holds for a finite Grassmannian, so $d_f$ should be $c(Q^*),$ but the Chern classes of the dual tautological bundle can not be pulled back to $Q^*$ via a holomorphic map because $c(L^*)$ is positive (i.e. the Chern classes of $L^*$ are linear combinations with non-negative coefficients of the Poincar\'e duals to analytic subvarieties) and $c(Q^*)$ is not.\par
Let $S$ be an ample line bundle over $\opn{Gr}(n,N).$ Then for $\alpha$ big enough, $Q_n^*\otimes S^{\otimes \alpha}$ is generated by its global holomorphic section and thus has positive Chern classes. There exists a holomorphic map 
$$f_{\alpha}\colon \opn{Gr}(n,N)\longrightarrow \opn{Gr}(n+l_{\alpha},N+M_{\alpha})$$
such that $f_{\alpha}^* (L_{n+l_{\alpha}}^*)=Q_n^*\otimes S^{\otimes \alpha}.$\par
Let us compute the total Chern class of this twisted bundle. Denote the bundles from the splitting principle for $Q_n^*$ by $E_1,\dots,E_n$ and their first Chern classes by $y_1,\dots,y_n,$ denote the first Chern class of $S$ by $z.$ Then the following identity holds:
$$c(Q_n^*\otimes S^{\otimes \alpha})=c(E_1\otimes S^{\otimes \alpha} \oplus \mydots \oplus E_n \otimes S^{\otimes \alpha})=$$
$$=\prod_{i=1}^n (y_i+\alpha z+1)=\prod_{i=1}^n (y_i+1)+\alpha P(\alpha)=c(Q_n^*)+\alpha \cdot P(\alpha),$$
where $\alpha \cdot P(\alpha)$ is a polynomial in $\alpha$ that contains all the summands of $\prod_{i=1}^n (x_i+\alpha y+1)$ that depend on $\alpha.$
Define $$\varphi_{\alpha}\colon \opn{Gr}(n,N)\times \opn{Gr}(k,N)\longrightarrow \opn{Gr}(n+l_{\alpha}, N+M_{\alpha})\times \opn{Gr}(k+l_{\alpha},N+M_{\alpha})$$
$$(V_1,V_2)\mapsto (V_1\oplus f_{\alpha}(V_1),V_2 \oplus f_{\alpha}(V_1))$$
Denote the total Chern class of the dual tautological bundle $L_{n+l_{\alpha}}^*$ on $\opn{Gr}(n+l_{\alpha}, N+M_{\alpha})$ by $\overline{c}$ and the total Chern class of the dual tautological bundle $L_{k+l_{\alpha}}^*$ on $\opn{Gr}(k+l_{\alpha}, N+M_{\alpha})$ by $\overline{c}'.$ Then by the previous discussion we have the following relations between the Chern classes:
$$\varphi^*(\overline{c})=c \cdot (c(Q_n^*) +\alpha P(\alpha))=1+ c\cdot \alpha P(\alpha)$$
$$\varphi^*(\overline{c}')=c' \cdot (c(Q_n^*)+\alpha P(\alpha))=c'/c + c' \cdot \alpha P(\alpha).$$\par
Or, on the level of the universal Thom polynomials:
$$\opn{UTp}[\Theta_A^j](1+c\cdot \alpha P(\alpha), c'/c + c' \cdot \alpha P(\alpha))=\opn{UTp}[\Theta_A^j](1, c'/c)+\alpha P_2(\alpha)=\opn{UTp}[\Theta_A^j](c,c'),$$
where $\alpha P_2 (\alpha)$ denotes all the summands that depend on $\alpha.$\par
Since $\alpha P_2 (\alpha)= \opn{UTp}[\Theta_A^j](c,c')-\opn{UTp}[\Theta_A^j] (1,c'/c)$ their expressions in the Schur polynomial basis are also equal:
$$\alpha P_2(\alpha)= \alpha \sum W_{\lambda \mu}(\alpha) s_{\lambda}(c)s_{\mu} (c')$$
$$\opn{UTp}[\Theta_A^j](c,c')-\opn{UTp}[\Theta_A^j] (1,c'/c) = \sum B_{\lambda \mu} s_{\lambda}(c) s_{\mu} (c')$$
$$\alpha \sum W_{\lambda \mu}(\alpha) s_{\lambda}(c)s_{\mu} (c') =\sum B_{\lambda \mu} s_{\lambda}(c) s_{\mu} (c') $$
This equation holds if and only if 
$$B_{\lambda \mu}=\alpha W_{\lambda \mu} (\alpha)$$
for all $\lambda$ and $\mu.$ However, since this is true for all sufficiently big $\alpha,$ the polynomial $B_{\lambda \mu}-\alpha W_{\lambda \mu} (\alpha)$ has infinite number of roots. Thus, it is zero for all $\alpha.$ This implies that $B_{\lambda \mu}=0$ for all $\lambda$ and $\mu,$ i.e. $\opn{UTp}[\Theta_A^j](c,c')=\opn{UTp}[\Theta_A^j] (1,c'/c).$
\end{proof}
\section{Positivity}\label{Pos}
The \textit{Schur polynomials} serve as a natural basis for the cohomology ring of  Grassmannians. Given an integer partition $\lambda=(\lambda_1,\mydots,\lambda_n),$ such that $N\geq \lambda_1\geq \lambda_2\geq \mydots \geq \lambda_n>0$ define the conjugate partition $\lambda^*=(\lambda_1^*,\mydots,\lambda_m^*)$ by taking $\lambda_i^*$ to be the $\text{largest}\ j\ \text{such that}\ \lambda_j\geq i.$ Denote by $s_{\lambda} (b_1,\mydots, b_n)$ the expression of the Schur polynomials in elementary symmetric polynomials:
$$s_{\lambda}(b_1,\mydots,b_n)=\opn{det} \{b_{\lambda_i^* +j -i}\}_{i,j=1}^{n}.$$\par
The Schur polynomials of degree $d$ in $n$ variables form a linear basis for the space of homogeneous degree $d$ symmetric polynomials in $n$ variables.\par
Consider the finite Grassmannian $\opn{Gr}(n,N).$ The Schur polynomials indexed by $\lambda$ such that $N\geq \lambda_1\geq \mydots \geq \lambda_n >0,$ evaluated in the Chern classes $c_1,\mydots, c_n$ of the dual tautological vector bundle $L_n^*$ are the Poincar\'e duals of the Schubert cycles -- homological classes of Schubert varieties $\sigma_{\lambda}$, special varieties forming a basis for the homology of the Grassmannian \cite{Ful}:
$$s_{\lambda}(c_1,\mydots,c_n)=\opn{PD}[\sigma_{\lambda}].$$\par
The following result was first proved by Pragacz and Weber. Here we give a new proof of this result.
\begin{theorem}[Pragacz, Weber, \cite{Prag}]
	Let $d,n,k\in \mathbb{N}$ and let $n\leq k.$ Suppose $A$ is a nilpotent algebra and $\Theta_A^{n,k}\subset J_d^{n,k}$ a contact singularity. The Thom polynomial of $\Theta_A^{n,k}$ expressed in the relative Chern classes is Schur-positive:
	$$\opn{Tp}[\Theta_A^{n,k}](c'/c)=\sum \alpha_{\lambda} s_{\lambda}(c'/c)$$
	where $\alpha_{\lambda}\geq 0.$
\end{theorem}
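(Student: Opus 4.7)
The plan is to realise the Thom polynomial expressed in the relative Chern classes as the Poincar\'e dual of an effective algebraic cycle on a finite Grassmannian, in a setup in which the variables $\tilde c_i$ become the Chern classes of the dual tautological bundle. Once this identification is made, the conclusion follows from the classical fact that the class of any effective algebraic cycle on a Grassmannian has nonnegative coefficients in the Schubert basis -- a consequence of Kleiman's transversality theorem applied to translates of the opposite Schubert varieties.

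To carry this out, I will restrict the action of $G=\opn{Gl}_n\times \opn{Gl}_k$ on $J_d^{n,k}$ to a subgroup $H$ under which $c$ specialises to $1$ and $c'$ specialises to the total Chern class of a dual tautological bundle. Fix an integer $l'$; by the stabilisation of Thom polynomials in $n$ and $k$ established in Section \ref{Dam}, I may pass to $n$ and $k=n+l$ large enough that $k\geq l'$. Embed $H:=\opn{Gl}_{l'}$ into $G$ via $g\mapsto(I_n,\opn{diag}(I_{k-l'},g))$, so that $H$ acts trivially on the source and by a block action on the target. Since $\overbar{\Theta}_A^{n,k}$ is $G$-invariant, it is in particular $H$-invariant, and the restriction map $H^{*}_G(J_d^{n,k})\to H^{*}_H(J_d^{n,k})$ carries $\opn{eP}_G(\overbar{\Theta}_A^{n,k})=\opn{Tp}[\Theta_A^{n,k}](c,c')$ to $\opn{eP}_H(\overbar{\Theta}_A^{n,k})$. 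Under this restriction $c\mapsto 1$ and $c'\mapsto c(L_{l'}^{*})$, hence $c'/c\mapsto c(L_{l'}^{*})$, and Damon's theorem yields
$$\opn{eP}_H\bigl(\overbar{\Theta}_A^{n,k}\bigr)=\opn{Tp}[\Theta_A^{n,k}]\bigl(c(L_{l'}^{*})\bigr)\in H^{*}(B\opn{Gl}_{l'}).$$

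The central technical step -- which I expect to be the main obstacle -- is to exhibit an effective algebraic cycle on the finite approximation $BH_m=\opn{Gr}(l',m)$ representing $\opn{eP}_H(\overbar{\Theta}_A^{n,k})$. The finite Borel construction $EH_m\times_H J_d^{n,k}$ is an algebraic vector bundle over $BH_m$ containing $EH_m\times_H \overbar{\Theta}_A^{n,k}$ as a closed $H$-invariant subvariety of the correct codimension, and a generic algebraic section of this bundle transverse to this subvariety pulls it back to an effective cycle on $BH_m$ whose class is $\opn{eP}_H(\overbar{\Theta}_A^{n,k})$. Producing such a transverse section in the algebraic category requires a Bertini-type argument for a sufficiently large linear system of sections, or Kleiman transversality combined with the $\opn{Gl}_{l'}$-action on the fibres.

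Granting this, expanding in the Schubert basis gives
$$\opn{Tp}[\Theta_A^{n,k}]\bigl(c(L_{l'}^{*})\bigr)=\sum_{\ell(\lambda)\leq l'}\alpha_\lambda\, s_\lambda\bigl(c(L_{l'}^{*})\bigr),\qquad \alpha_\lambda\geq 0.$$
Since Schur polynomials $s_\lambda$ with $\ell(\lambda)>l'$ vanish when one sets $\tilde c_i=0$ for $i>l'$, this evaluation exactly recovers the Schur coefficients of $\opn{Tp}[\Theta_A^{n,k}](c'/c)$ indexed by partitions of length at most $l'$. Choosing $l'$ larger than the degree of $\opn{Tp}[\Theta_A^{n,k}]$ -- possible by the stabilisation argument above -- captures every partition that can appear in the expansion, so every Schur coefficient $\alpha_\lambda$ is nonnegative.
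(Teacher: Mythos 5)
Your overall strategy is the same as the paper's: use Damon's theorem to reduce to a single set of variables, specialize $c\mapsto 1$ and $c'\mapsto$ the Chern classes of a dual tautological bundle on a finite Grassmannian (you do this by restricting the group to a block subgroup $\opn{Gl}_{l'}\subset \opn{Gl}_n\times\opn{Gl}_k$ acting trivially on the source; the paper fixes $V_0\in\opn{Gr}(n,N)$ and pulls back along $V\mapsto (V_0,V)$ -- cohomologically the same move), then represent the specialized class by the preimage of $\Sigma=EH_m\times_H\overbar{\Theta}_A^{n,k}$ under a generic section of the finite Borel-construction bundle, and conclude positivity from Kleiman's theorem applied to intersections with Schubert varieties of complementary dimension. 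The bookkeeping with $l'$ and partition lengths is fine. The problem is exactly at the step you yourself flag as the main obstacle, and as written neither of your two suggested ways of closing it works. Genericity of holomorphic (or algebraic) sections yields transversality only when the bundle has enough sections, i.e. is globally generated; holomorphic sections cannot be perturbed at will, and for an arbitrary equivariant vector bundle the space of sections could be far too small. This is precisely what the paper proves and you omit: the identification $EG_N\times_G J_d^{n,k}\cong\left(\bigoplus_{i=1}^d\opn{Sym}^i L_n\right)\boxtimes L_k^*$, which after making the source action trivial turns the bundle into $\opn{Hom}(L_k,\C^D)$ with $D=\binom{d+n}{n}-1$, whose global sections $\opn{Hom}(\C^N,\C^D)$ evaluate surjectively on every fibre; only then does the parametric transversality theorem give a section transverse to $\varphi^{-1}(\Sigma)$. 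In your setup the analogous computation would give $\opn{Triv}\oplus\opn{Hom}(L_{l'},\C^D)$, so the claim is true, but invoking "a Bertini-type argument for a sufficiently large linear system" presupposes exactly the fact you have not established, namely that the linear system is large.

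Your alternative, "Kleiman transversality combined with the $\opn{Gl}_{l'}$-action on the fibres," cannot work on its own: $\overbar{\Theta}_A^{n,k}$ is $\opn{Gl}_k$-invariant, hence invariant under the fibrewise $\opn{Gl}_{l'}$-action, so translating $\Sigma$ (or the section) by that group never improves transversality; the transitivity on the total space needed for a Kleiman-type argument comes from translations by global sections, i.e. again from global generation. Two further points you should make explicit: transversality to the smooth locus of $\Sigma$ alone does not give a cycle in the right class -- one needs the section generic with respect to the whole stratification by singular loci, which is the finite induction at the end of the paper's proof -- and the restriction map $H^*_G\to H^*_H$ does carry $\opn{eP}_G(\overbar{\Theta}_A^{n,k})$ to $\opn{eP}_H(\overbar{\Theta}_A^{n,k})$, but this uses the transversal-pullback criterion from Section \ref{Dam}, not merely functoriality, and deserves a sentence of justification.
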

\begin{proof}
By Damon's theorem, Thom polynomials for contact singularities can be written as follows:
$$\opn{Tp}[\Theta_A^{n,k}](c,c')=\opn{Tp}[\Theta_A^{n+M,k+M}](1,c'/c)=$$ $$=\sum_{\lambda}\alpha_{0 \lambda }s_{0}(1)s_{\lambda}(c'/c)=\sum_{\lambda}\alpha_{0 \lambda }s_{\lambda}(c'/c)$$ 
for $M$ big enough. To prove the positivity we show that $\alpha_{0 \lambda }\geq 0$ for all $\lambda.$\par
Fix a plane $V_0\in \opn{Gr}(n,N)$ and define the map $$h\colon \opn{Gr}(k,N)\longrightarrow \opn{Gr}(n,N)\times \opn{Gr}(k,N)$$  $$h(V)=(V_0,V).$$\par 
Let $\varphi$ be the unique map making the following diagram commutative: 
$$\xymatrix {\varphi^{-1}(EG_N\times_G \overbar{\Theta}_A^{n,k})\ar @{^{(}->}[r] & h^*(EG_N\times_G J_d^{n,k}) \ar[r]^-{p_2} \ar[d]^-{\varphi} & \opn{Gr}(k,N) \ar[d]^-{h}\\
\Sigma=EG_N\times_G \overbar{\Theta}_A^{n,k} \ar @{^{(}->}[r] & EG_N\times_G J_d^{n,k} \ar[r]^-{p_1} & \opn{Gr}(n,N)\times \opn{Gr}(k,N)}$$\par
The idea of the proof is to show that $$\sum_{\lambda}\alpha_{0 \lambda }s_{\lambda}(c') = h^* (\opn{Tp}[\Theta_A^{n,k}](c,c'))=\opn{PD}[X],$$ where $X$ is an analytic cycle in $\opn{Gr}(k,N).$\par
Let $\sigma_{\lambda'}$ be a homology class of a Schubert variety of dimension complementary to $\opn{dim}X$. $\opn{Gl}(k)$ acts transitively on $\opn{Gr}(k,N),$ so by Kleiman's theorem \cite{Kl} there exists $A \in \opn{Gl}_k$ such that $(A X) \cap \sigma_{\lambda '}$ is of expected dimension (so, discrete) and $A X$ is homologous to $X.$ 
$$\# (X\cap \sigma_{\lambda '})=\opn{PD}[X]\cdot\opn{PD}[\sigma_{\lambda'}]=\sum_{\mu}\alpha_{0 \mu }s_{\mu}(c')s_{\lambda'}(c')= \alpha_{0 \lambda }=$$
$$=\# (AX \cap \sigma_{\lambda'})=\sum_{x\in CX\cap \sigma_{\lambda '}}\opn{mult}_x\geq 0.$$\par
Here $\opn{mult}_x$ is an intersection multiplicity, which is non-negative for two analytic cycles.\par 
Let us consider the details. We should construct the algebraic variety $X.$ First, denote $EG_N\times_G J_d^{n,k}$ by $E$ and $EG_N\times_G \overbar{\Theta}_A^{n,k}$ by $\Sigma$ for short. It is clear that ${\varphi}^{-1}(\Sigma)\subset h^*(E).$ If $\varphi$ is also transversal to $\Sigma,$ then we have that $${\varphi}^* \opn{PD}[\Sigma]=\opn{PD}[{\varphi}^{-1}(\Sigma)].$$\par
By definition, we need to show that: $$\opn{Im}(d_x(\varphi))+T_{\varphi (x)}\Sigma=T_{\varphi (x)} E$$ for $x\in \varphi^{-1}(\Sigma).$ Locally $$T_{(z,y)} E=T_{z}(\opn{Gr}(n,N)\times \opn{Gr}(k,N))\oplus T_{y}\mathcal{J}_d^{n,k}$$ for $z\in EG_N=\opn{Gr}(n,N)\times \opn{Gr}(k,N)$ and $y\in J_d^{n,k}.$ With this interpretation the transversality is obvious since $\opn{Im}(d_x(\varphi))$ has $T_{y}\mathcal{J}_d^{n,k}$ as a direct summand and $T_{\varphi (x)}\Sigma$ has $T_{z}(\opn{Gr}(n,N)\times \opn{Gr}(k,N))$ as a direct summand.\par
Let us show that the vector bundle $h^*(E)$ has enough holomorphic sections to find a holomorphic section $s$ transversal to $\varphi^{-1}(\Sigma).$ 
\begin{lemma}
$EG_N\times_G J_d^{n,k}=\left({\bigoplus}_{i=1}^d\opn{Sym}^i L_n \right)\boxtimes L_k^*$ 
\end{lemma}
\begin{proof}
An element of a fiber of $EG_N\times_G J_d^{n,k}$ is a class $[(e_n,e_k),f],$ where $f \in \mathcal{J}_d^{k,n},$ $e_n$ is a frame, i.e. a linear injective map form $\C^n$ to $\C^N,$ and $e_k$ is a linear injective map form $\C^k$ to $\C^N.$ We consider a class $[(e_n,e_k),f]$ with the equivalence relation $$((e_n,e_k),f) \sim ((e_n A_n^{-1},e_k A_k^{-1}),A_k f A_n^{-1}),$$\par
where $A_n \in \opn{Gl}_n,$ $A_k \in \opn{Gl}_k.$\par
An element of the fiber of $\left({\bigoplus}_{i=1}^d\opn{Sym}^i L_n \right)\boxtimes L_k^*$ is a polynomial function of degree at most $d$ without a constant term between $V_n \in \opn{Gr}(n,N)$ and $V_k \in \opn{Gr}(k,N).$\par
The map $[(e_k,e_n),f] \mapsto e_n \circ f \circ e_k^{-1}$ is correctly defined and is a bijection.
\end{proof}
We use this lemma to 'decompose' $h^*(E):$
$$h^*(E)=\left({\bigoplus}_{i=1}^d \opn{Sym}^i(\opn{Triv}_n)\right)\otimes L_k^*=\opn{Triv}_{{d+n \choose n}-1}\otimes L_k^*,$$
where $\opn{Triv}_n$ is a trivial vector bundle whose fiber is a complex vector space of dimension $n.$\par
We use the following theorem to show that this bundle has enough global holomorphic sections to find one transversal to $\varphi^{-1}(\Sigma).$
\begin{theorem}[Parametric transversality theorem, \cite{Hir}]
	Let $M$, $K$, $Z$, $S$ be smooth manifolds. Consider $F\colon M\times S \rightarrow K\supset Z$, smooth map transversal to $Z.$ Then for almost all $s\in S$ the map $F_s$ is transversal to $Z.$ 
\end{theorem}
Let $D={{d+n \choose n}-1}.$ In the notations of Thom's transversality theorem, let $$M=\opn{Gr}(k,N),\ K=\opn{Hom}(L_k,\C^{D})\cong h^* (EG_N\times_G J_d^{n,k}),$$
$$Z=\varphi^{-1}(\Sigma),\ S=\Gamma(\opn{Hom}(L_k,\C^D))=\opn{Hom}(\C^N,\C^D).$$\par
Then, the map $F$ from the theorem is the following:
$$F\colon \opn{Gr}(k,N)\times \opn{Hom}(\C^N,\C^D)\longrightarrow \opn{Hom}(L_k,\C^D).$$
$$(V,f)\mapsto f|_V.$$\par
The transversality of $F$ to $\varphi^{-1}(\Sigma)$ obviously follows from the fact that $d_{(V,f)}F$ is surjective for all $V$ and $f$.\par
Now, by Parametric transversality theorem, the set of holomorphic sections of $h^*(E)$ transversal to smooth points of ${\varphi}^{-1}(\Sigma)$ is open and dense in all holomorphic sections of this bundle. The set of holomorphic sections of $h^*(E)$ transversal to smooth points of the set of singular points of ${\varphi}^{-1}(\Sigma)$ is open and dense in the set of holomorphic sections transversal to smooth point of ${\varphi}^{-1}(\Sigma)$, and so on. Since this procedure drops the dimension of the variety, it is the finite process and the intersection of a finite number of open and dense sets is again open and dense. So, we can choose a holomorphic section $s$ transversal to ${\varphi}^{-1}(\Sigma).$ \par  
 The analytic subvariety $X$ from the discussion at the beginning of the proof is $s^{-1}{\varphi}^{-1}(\Sigma):$ $$\opn{PD}[s^{-1}{\varphi}^{-1}(\Sigma)]=s^*\opn{PD}[{\varphi}^{-1}(\Sigma)]=({{pr_2}^*})^{-1}{\varphi}^*\opn{PD}[\Sigma]=h^* ({{pr_1}^*})^{-1}\opn{PD}[\Sigma]=$$
 $$=h^* \opn{Tp}[\Theta_A^{n,k}](c,c')=\sum_{\lambda}{\alpha}_{0 \lambda }s_{\lambda}(c')$$ and the proof of positivity is complete.		
\end{proof}

\end{document}